\newif\ifdviwin
\newif\ifdviwin
\def\H{\mathcal{H}}
\def\H{\mathfrak{h}}
\def\m2r{\mathbb{M}^2\times\mathbb{R}}
\def\h2r{\mathbb{H}^2\times\mathbb{R}}
\let\infty=\infty \let\0=\emptyset
\let\tilde=\widetilde
\let\parc=\partial
\let\varepsilon=\varepsilon
\def\cte.{\mathop{\rm cte.}\nolimits}
\def\N{\mathbb{N}}
\def\R{\mathbb{R}}
\def\M{\mathbb{M}}
\def\H{\mathcal{H}}
\def\D{\mathbb{D}}
\def\S{\mathbb{S}}
\def\X{\mathfrak{X}}
\def\m2r{\M^2\times\R}
\def\mkr{\M^2(\kappa)\times\R}
\def\h2r{\mathbb{H}^2\times\R}
\def\s2r{\mathbb{S}^2\times\R}
\def\hkr{\mathbb{H}^2(\kappa)\times\R}
\def\nil{\mathrm{Nil}_3}
\def\psl{\tilde{PSL_2}(\R)}
\def\hipar{\H\in \mathfrak{C}^1_{\kappa,\mathrm{even}}(I)}
\def\hi{\H\in \mathfrak{C}^1_\kappa(I)}
\def\hh{\H\in C^1([-1,1])}
\def\Hs{\mathcal{H}\text{-}\mathrm{surface}}
\def\Hss{\mathcal{H}\text{-}\mathrm{surfaces}}
\def\Hgg{\mathcal{H}\text{-}\mathrm{graphs}}
\def\sig{\Sigma}
\def\r3{\mathbb{R}^3}
 \newtheorem{defi}{Definition}[section]
 \newtheorem{teo}[defi]{Theorem}
 \newtheorem{pro}[defi]{Proposition}
 \newtheorem{cor}[defi]{Corollary}
 \newtheorem{lem}[defi]{Lemma}
 \newenvironment{proof}{\rm \trivlist \item[\hskip \labelsep{\it
      Proof}:]}{\par\nopagebreak \hfill $\Box$ \endtrivlist}
 \newenvironment{proofsk}{\rm \trivlist \item[\hskip \labelsep{\it
      Sketch of the proof }:]}{\par\nopagebreak \hfill $\Box$ \endtrivlist}
\numberwithin{equation}{section}
\begin{document}


\begin{center}

\renewcommand{\thefootnote}{\,}
{\large \bf Prescribed mean curvature surfaces in the product spaces $\mathbb{M}^2(\kappa)\times\R$; Height estimates and classification results for properly embedded surfaces
\footnote{\hspace{-.75cm}
\emph{Mathematics Subject Classification:} 53A10\\
\emph{Keywords}: Prescribed mean curvature, product space, strictly convex sphere, height estimate, structure result.\\
The author was partially supported by MICINN-FEDER Grant No. MTM2016-80313-P, Junta de Andalucía Grant No. FQM325 and FPI-MINECO Grant No. BES-2014-067663.}}\\
\vspace{0.5cm} { Antonio Bueno}\\
\end{center}
\vspace{.5cm}
Departamento de Geometría y Topología, Universidad de Granada, E-18071 Granada, Spain. \\ 
\emph{E-mail address:} jabueno@ugr.es \vspace{0.3cm}

\begin{abstract}
The aim of this paper is to extend classic results of the theory of CMC surfaces in the product spaces to the class of immersed surfaces in $\mkr$ whose mean curvature is given as a $C^1$ function depending on their angle function. We cover topics such as the existence of a priori curvature estimates for graphs, height estimates for horizontal and vertical compact graphs and a structure-type result, which classifies the properly embedded surfaces with finite topology and at most one end.
\end{abstract}

\section{\large Introduction}
\vspace{-.5cm}
One of the peaks in the theory of positive constant mean curvature surfaces (CMC surfaces in the following) in the Euclidean space $\r3$ is the \emph{structure theorem for properly embedded CMC surfaces} due to Meeks \cite{Mee}, which states the following: 

\emph{Let $\sig$ be a properly embedded CMC surface in $\r3$ with $k$ ends\footnote{Let $\sig$ be a properly embedded CMC surface in $\r3$. Then, $\sig$ is homeomorphic to a closed (compact without boundary) surface with $k\geq 0$ points $\{p_i\}_{i=1,...,k}$ removed, and there exists neighborhoods $U_i$ around each $p_i$ s.t. $E_i:=U_i-\{p_i\}$ is homeomorphic to $\S^1\times [0,\infty)$. Then, each $E_i$ is called an end of $\sig$.}. Then, each end of $\sig$ is cylindrically bounded, and all of them cannot lie in the same half-space. In particular:
\begin{itemize}
\item[1.] If $k=0$, then $\sig$ is the totally umbilical CMC sphere.	
\item[2.] The case $k=1$ cannot happen.
\item[3.] If $k=2$, then $\sig$ is cylindrically bounded. Moreover, $\sig$ is a right circular cylinder or an unduloid.
\item[4.] If $k=3$, then $\sig$ is contained in a \emph{slab} (the open region delimited by two parallel planes).
\end{itemize}}

This pioneer work from Meeks was also generalized to other ambient spaces, see \cite{KKMS} for an adaptation to the hyperbolic space $\mathbb{H}^3$. The cornerstone for obtaining this result was the existence of \emph{uniform height estimates} for CMC graphs, and the possibility of extending these height estimates to compact, embedded surfaces by reflecting w.r.t. totally geodesic surfaces by applying Alexandrov's reflection technique. In both spaces this procedure can be done in any direction of the space, since both are \emph{isotropic}; their isometry group act transitively in their tangent bundle, and no direction at all is in some sense \emph{privileged}.

Besides these space forms, which have isometry group with dimension 6, the product spaces $\mkr$ defined as the riemannian product of the complete simply connected surface $\M^2(\kappa)$ with constant curvature $\kappa$ and the real line $\R$, are among the next most symmetric spaces having isometry group of dimension 4. The theory of immersed surfaces in the product spaces expermiented an extraordinary development since Abresch and Rosenberg \cite{AbRo} defined a holomorphic quadratic differential on every constant mean curvature surface, that vanishes on rotational examples. This quadratic differential, called in the literature the \emph{Abresch-Rosenberg differential}, enabled the authors to extend the so called Hopf theorem: an immersion of a closed surface with genus zero and positive constant mean curvature is a rotational, embedded sphere.

Although these spaces are not isotropic, the existence of uniform height estimates as well as some results of Meeks structure theorem have been generalized when $\sig$ is either a CMC surface or a surface with positive constant extrinsic curvature in $\mkr$, see \cite{AEG,ChRo,EGR, HLR,Maz,NeRo} for instance. We should highlight Nelli and Rosenberg's work \cite{NeRo}, where the authors showed the non-existence of properly embedded surfaces with one end and constant mean curvature $H_0>1/\sqrt{3}$ by proving a distance lemma to the boundary of each compact, stable CMC surface with $H_0>1/\sqrt{3}$. The sharp height estimates obtained in \cite{AEG} ensures us that this non-existence theorem holds for every constant $H_0>1/2$. This inequality is optimal, since for the value of the mean curvature $H_0=1/2$ there exists CMC entire graphs in $\h2r$.

The class of immersed surfaces on which we will focus in this paper is the following:

\begin{defi}
Let $\H\in C^1([-1,1])$ be a $C^1$ function. An immersed surface $\sig$ in the product space $\M^2(\kappa)\times\R$ has \emph{prescribed mean curvature} $\H$ if its mean curvature $H_\sig$ satisfies
\begin{equation}\label{defHsup}
H_\sig(p)=\H(\nu_p),\hspace{.5cm} \forall p\in\sig,
\end{equation}
where $\nu:\sig\rightarrow\R$ is the \emph{angle function} of $\sig$.
\end{defi}
For short, we will say that $\sig$ is an $\H$\emph{-surface}. 

The problem of studying immersed surfaces governed by some symmetric function depending on its principal curvatures and its Gauss map has its origins in a long standing conjecture due to Alexandrov \cite{Ale} regarding the uniqueness of strictly convex spheres with \emph{prescribed Weingarten curvature} \footnote{An immersed surface $\sig$ has \emph{prescribed Weingarten curvature} if it is governed by a function of its principal curvatures and its Gauss map.} in $\R^3$. This conjecture was recently solved by Gálvez and Mira as a consequence of their oustanding work \cite{GaMi}, where the authors announced an extremely general Hopf-type theorem\footnote{In the literature, a \emph{Hopf-type theorem} refers to a uniqueness result of spheres in some class of immersed surfaces} for immersed surfaces governed by an elliptic PDE in an arbitrary oriented three-manifold.

For the particular but important case that we prescribe the mean curvature, the global theory of \emph{prescribed mean curvature surfaces} has been recently developed by Bueno, Gálvez and Mira in \cite{BGM}, where the authors covered topics such as the classification of rotational $\Hss$, including a Delaunay-type classification result; the existence of a priori curvature and height estimates for compact $\Hgg$; a classification result for properly embedded $\Hss$; the stability of $\Hss$ and a radius estimate for stable $\Hss$.

Notice that there are classes of immersed in $\m2r$ whose mean curvature is given by \eqref{defHsup} that have been previously studied. For instance, if $\H(y)=y$ then the immersed surfaces in $\mkr$ defined by \eqref{defHsup} are the \emph{self-translating solitons of the mean curvature flow} in the product spaces $\mkr$. The properties of these solitons have been studied recently in \cite{Bue1,LiMa}.

In general, the class of immersed surfaces in $\mkr$ governed by Equation \eqref{defHsup} has been already defined in \cite{Bue3}, where we studied rotational $\Hss$ and generalized some of the ideas developed in \cite{BGM}. Specifically, we studied rotational $\Hss$ by means of a phase plane analysis which led us to prove the existence and uniqueness of immersed $\H$-spheres and a Delaunay-type\footnote{In the theory of CMC surfaces in $\R^3$, the \emph{Delaunay theorem} states that there are exactly four types of complete, rotational CMC surfaces: a totally umbilical sphere, a right circular cylinder, a uniparametric family of onduloid-type surfaces and a uniparametric family of nodoid-type surfaces. In \cite{Bue3} we extended this result for the class of $\Hss$, under some hypothesis on $\H$.} classification result, under some geometric conditions on $\H$. We also focused on the construction of entire, strictly convex $\H$-graphs and properly embedded annuli, named $\H$-catenoids for resembling reasons with the minimal case $\H\equiv 0$, in the spaces $\hkr$. Finally, we analyzed further examples of $\Hss$ in $\mkr$, showing the richness and wideness of the class of $\Hss$ for arbitrary choices of $\H$.

While studying these further examples, we constructed $\Hss$ that cannot exist in the CMC theory. Indeed, we proved the existence of properly embedded disks which are cylindrically bounded, see Section 5 in \cite{Bue3}. Motivated by the
natural mixture between Meeks structure theorem,  the works of Espinar, Gálvez and Rosenberg \cite{EGR} and Nelli and Rosenberg \cite{NeRo}, the global properties of $\Hss$ studied in Section 4 in \cite{BGM}, and the singular examples arising in \cite{Bue3}, the aim of this paper is to understand the global behavior of properly embedded $\Hss$ in $\mkr$ with finite topology. 

The rest of the introduction is devoted to detail the organization of the paper, and highlight some of the main results.

In \textbf{Section \ref{secpropbasicas}} we will define 	the product spaces $\mkr$ as the riemannian product of the complete, simply connected surface $\M^2(\kappa)$ of constant curvature $\kappa$\footnote{Up to an homothety of the metric, we will assume that $\kappa=\pm 1$.} and the real line $\R$, endowed with the usual product metric. In Equation \eqref{espaciosc1h} we define the spaces of functions $\mathfrak{C}_\kappa^1(I)$ and $\mathfrak{C}_{\kappa,even}^1(I)$, that will be useful in the development of this paper. We recall the group of isometries of $\m2r$, and how the symmetries of Equation \eqref{defHsup} induce further invariance properties of the class of $\Hss$, as detailed in Lemma \ref{sime}. In particular, almost all the isometries in $\m2r$ will be also isometries for the class of $\Hss$. 

Inspired by the ideas developed in \cite{RST}, we announce a curvature estimate lemma \ref{estuniforme} for a sequence of compact $\Hgg$, and a compactness theorem \ref{compa} for the space of $\Hss$ with bounded curvature. In some sense these results will allow us to take limit in a sequence of $\Hss$ with bounded curvature and having a finite accumulation point.

In \textbf{Section \ref{secuniformheight}} we will focus in obtaining uniform height estimates for compact $\H$-graphs. In Definition \ref{uhai} we settle on the concept of uniform height estimate w.r.t. a horizontal or a vertical direction in $\m2r$. For the spaces $\h2r$, in Proposition \ref{estialturahorizontal} we obtain a uniform height estimates for horizontal compact $\Hgg$. Notice that these estimates does not make sense in the space $\s2r$ since the base is compact.

For the case where the graphs are vertical, the situation is trickier. In \cite{Bue3} we constructed, for a prescribed function $\hi$, vertical proper $\Hgg$ in $\m2r$ contained in a vertical cylinder, and thus having unbounded height to any horizontal plane. These examples show that more conditions on $\H$ are needed if we expect to obtain the desired estimates. In this situation, if $\hipar$ then in Theorem \ref{estialturavertical} we obtain a uniform height estimates for vertical compact $\Hgg$ in the spaces $\m2r$. 

They way of obtaining these estimates is completely different from the ideas used by Hoffman, de Lira and Rosenberg for CMC surfaces in \cite{HLR} and by Espinar, Gálvez and Rosenberg for positive extrinsic curvature surfaces in \cite{EGR}. As $\H$ is an even function, Alexandrov's reflection technique guarantees the existence of uniform height estimates for compact, embedded surfaces with boundary contained in either horizontal or vertical planes.

Lastly, in \textbf{Section \ref{secestructura}} we study properly embedded $\Hss$ with finite topology. First we recall an outstanding theorem due to Meeks \cite{Mee}, the plane separation lemma, see \ref{seplem}. This theorem along with the height estimates obtained in Section \ref{secuniformheight}, allows us to prove in Proposition \ref{1finalcilac} that if $\hi$, then every properly embedded $\Hs$ in $\h2r$ with finite topology and one end, must lie inside a solid vertical cylinder. For a prescribed function $\hipar$, we obtain in Theorem \ref{no1final} a more definite result: the non-existence of properly embedded $\Hss$ with finite topology and one end in the spaces $\m2r$, extending the known results due to Nelli and Rosenberg \cite{NeRo} in $\h2r$. We conclude this paper by announcing a structure-type classification result in Theorem \ref{estructura1final}: a properly embedded $\Hs$ with finite topology and at most one end, must be a rotational $\H$-sphere.

\section{\large Basic properties of $\H$-surfaces in the spaces $\mkr$}\label{secpropbasicas}
\vspace{-.5cm}

Let $\M^2(\kappa)$ be the complete, simply connected surface with constant curvature $\kappa$. Then, up to isometries, $\M^2(\kappa)$ is one of the following surfaces: if $\kappa=0$ we get the usual flat plane $\R^2$; if $\kappa<0$ we obtain the hyperbolic plane $\mathbb{H}^2(1/\sqrt{-\kappa})$; and if $\kappa>0$ we recover the totally umbilical sphere $\S^2(1/\sqrt{\kappa})$. We drop out the case $\kappa=0$, since the theory of immersed $\Hss$ in $\R^3$ was widely studied in \cite{BGM}. These non-flat surfaces can be regarded as isometrically immersed surfaces in the space $\R^3_\kappa$, where $\R^3_\kappa$ stands for the usual Euclidean space if $\kappa>0$ or for the Lorentz-Minkowski space $\mathbb{L}^3$ endowed with the metric with signature $+,+,-$ if $\kappa<0$. Indeed, the surface $\M^2(\kappa)$ can be defined as the quadric
\begin{equation}\label{modelobase}
\M^2(\kappa)=\{(x_1,x_2,x_3)\in\R^3_\kappa;\ x_1^2+x_2^2+\kappa x_3^2=1/\kappa,\ (1-\kappa)x_3>0\}.
\end{equation}
Clearly, after an homothety of the metric we can suppose that $\kappa=\pm 1$, and we drop the dependence on $\kappa$ by just writing $\M^2$. 

The product spaces $\m2r$ are defined as the riemannian product of the complete surface $\M^2$ with the real line, endowed with the product metric. This product structure ensures the existence of a Killing vector field defined as the gradient of the projection onto the real factor, called the \emph{vertical vector field} and denoted by $e_3$. The \emph{vertical planes} are the surfaces given by $\gamma\times\R$, where $\gamma\subset\M^2$  is a geodesic, and they are totally geodesic surfaces isometric to $\R^2$. The \emph{horizontal planes} are the surfaces given by $\M^2\times\{t_0\},\ t_0\in\R$, and they are totally geodesic surfaces isometric to $\M^2$. We may refer also to these horizontal planes as \emph{slices}. Commonly, we will identify the base $\M^2$ with the totally geodesic slice $\M^2\times\{0\}$.

For the possible values of $\kappa=\pm 1$, we define the sets of functions
\begin{equation}\label{espaciosc1h}
\begin{array}{c}
\mathfrak{C^1_\kappa}(I)=\Big\{\H\in C^1(I);\ 4\H(y)>(1-\kappa),\ \forall y\in I\Big\},\\

\mathfrak{C}^1_{\kappa,\mathrm{even}}(I)=\Big\{\H\in C^1(I);\ \H(y)=\H(-y),\ 4\H(y)>(1-\kappa)\sqrt{1-y^2},\ \forall y\in I \Big\},
\end{array}
\end{equation}
where $I$ will denote throughout this paper the closed interval $[-1,1]$.

For the particular case that $\H=H_0\in\R$, this condition has a strong geometric sense in the space $\h2r$. Indeed, if $\kappa=-1$ the value $1/2$ is \emph{critical} for the existence of CMC spheres; this condition ensures us the existence of a sphere with constant mean curvature $H_0<\min\H$. In our paper \cite{Bue3} we related this hypothesis with the existence of $\H$-spheres, see Theorem 3.8.

Besides the space forms $\R^3,\mathbb{H}^3$ and $\S^3$, whose isometry group has dimension six (the highest for a three dimensional space), the product spaces $\m2r$ have isometry group of dimension four, the second highest for a three-dimensional space. This group is generated by three linearly independent translations and the isometric $SO(2)$ action of rotations that leaves pointwise fixed a \emph{vertical line} of the form $p\times\{\R\}$, where $p\in\M^2$. 

One of the uniparametric group of translations is the flow of the vertical Killing vector field $e_3$. The product structure ensures us that the other two linearly independent group of translations in $\m2r$ are generated by two linearly independent translations of the base $\M^2$. The remaining isometries in $\m2r$ are obtained by linear combinations of the previously introduced isometries.

Next, we derive a consequence of Equation \eqref{defHsup} when we see an $\Hs$ locally as a vertical or horizontal graph. Let $\sig$ be an $\Hs$ and take some $p\in\sig$. Suppose that $\eta_p$ is not an horizontal vector. Thus, the implicit function theorem ensures us that a neighborhood of $p\in\sig$ can be expressed as a vertical graph of a function $u:\Omega\subset\M^2\rightarrow\R$. In this situation, Equation \eqref{defHsup} has the following divergence expression
$$
{\rm div_{\M^2}}\left(\displaystyle \frac{\nabla^{\M^2} u}{\sqrt{1+|\nabla^{\M^2} u|^2}}\right) = 2\H \left(\displaystyle\frac{1}{\sqrt{1+|\nabla^{\M^2}u|^2}}\right),
$$
where ${\rm div_{\M^2}}$ and $\nabla^{\M^2}$ are the divergence and gradient operators, both computed w.r.t. the metric of the base. If $\eta_p$ is horizontal, then $\sig$ can be expressed as a horizontal graph which also satisfies a divergence-type equation, see e.g. Section 5 in \cite{Maz}. In particular, the $\Hss$ satisfies the Hopf maximum principle in its interior and boundary versions, a property that has the following geometric implication

\begin{lem}[\textbf{Maximum principle for $\H$-surfaces}]\label{pmax}
Given $\hh$, let $\Sigma_1,\Sigma_2$ be two $\H$-surfaces, possibly with non-empty smooth boundary. Assume that one of the following two conditions holds:
\begin{enumerate}
\item
There exists $p\in {\rm int}(\Sigma_1)\cap {\rm int}(\Sigma_2)$ such that $\eta_1(p)=\eta_2(p)$, where $\eta_i$ is the unit normal of $\Sigma_i$, $i=1,2$.
\item
There exists $p\in \partial \Sigma_1 \cap \partial \Sigma_2$ such that $\eta_1(p)=\eta_2(p)$ and $\xi_1(p)=\xi_2(p)$, where $\xi_i$ denotes the interior unit conormal of $\partial \Sigma_i$.
\end{enumerate}
Assume moreover that $\Sigma_1$ lies around $p$ at one side of $\Sigma_2$. Then $\Sigma_1=\Sigma_2$.
\end{lem}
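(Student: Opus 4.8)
The plan is to reduce this geometric tangency statement to the classical Hopf maximum principle for a quasilinear elliptic operator, using crucially that $\H$ prescribes the mean curvature through the angle function, that is, through the first derivatives of a local graph, and therefore introduces no zeroth-order dependence.

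First I would write both surfaces as graphs over a common domain. Since $\eta_1(p)=\eta_2(p)$, the tangent planes of $\Sigma_1$ and $\Sigma_2$ at $p$ agree, so after choosing coordinates adapted to $\eta_p$ the implicit function theorem expresses a neighborhood of $p$ in each $\Sigma_i$ as a graph $u_i$ over a common domain $\Omega$; concretely these are vertical graphs over $\Omega\subset\M^2$ when $\eta_p$ is not horizontal, and horizontal graphs when it is, and in both regimes $u_i$ solves a divergence-form equation as recalled above. The hypothesis that $\Sigma_1$ lies at one side of $\Sigma_2$ near $p$ translates into $w:=u_1-u_2$ having a constant sign, while $\eta_1(p)=\eta_2(p)$ gives $w(p)=0$ and $\nabla^{\M^2} w(p)=0$.

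The key step is to check that $w$ satisfies a linear, uniformly elliptic equation near $p$ with no zeroth-order term. Both $u_i$ solve $Q[u]=0$, where
\[
Q[u]={\rm div}_{\M^2}\!\left(\frac{\nabla^{\M^2} u}{\sqrt{1+|\nabla^{\M^2} u|^2}}\right)-2\,\H\!\left(\frac{1}{\sqrt{1+|\nabla^{\M^2} u|^2}}\right).
\]
The operator $Q$ depends on $u$ only through $\nabla^{\M^2} u$ and its second derivatives, since the angle function $1/\sqrt{1+|\nabla^{\M^2}u|^2}$ feeding $\H$ is itself a function of $\nabla^{\M^2} u$; in particular $Q$ has no explicit dependence on $u$. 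Writing $0=Q[u_1]-Q[u_2]$ and integrating the differential of $Q$ along the segment $t\mapsto tu_1+(1-t)u_2$, I obtain $Lw:=a_{ij}\partial_{ij}w+b_i\partial_i w=0$, where $(a_{ij})$ is positive definite by the ellipticity of the mean curvature operator (the gradients stay bounded near $p$) and $b_i$ absorbs the gradient dependence of both the mean curvature operator and of $\H$. There is no term $c\,w$.

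Finally I would apply the maximum principle to $L$. In case (1), $p$ is an interior zero-touching point of the sign-definite $w$ with $Lw=0$, so the strong interior maximum principle forces $w\equiv 0$ near $p$. In case (2), $p\in\partial\Omega$ is a boundary touching point, and the conormal matching $\xi_1(p)=\xi_2(p)$ together with $\eta_1(p)=\eta_2(p)$ makes the outward normal derivative of $w$ vanish at $p$, contradicting the Hopf boundary point lemma unless $w\equiv 0$ near $p$. Either way $\Sigma_1$ and $\Sigma_2$ coincide on a neighborhood of $p$ with the same orientation, and to conclude the coincidence set is nonempty, open by rerunning this local argument at any of its points, and closed by continuity of the immersions and their normals, so connectedness of $\Sigma_1$ yields $\Sigma_1=\Sigma_2$. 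The main obstacle I anticipate is confirming cleanly, and uniformly across the vertical- and horizontal-graph regimes, that the linearized equation for $w$ carries no zeroth-order term and stays uniformly elliptic, as it is precisely this absence of a $c\,w$ term, guaranteed by $\H$ depending on the normal only through the angle function, that licenses the unqualified strong maximum principle and boundary point lemma for the one-sided function $w$.
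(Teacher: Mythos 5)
Your proof is correct and follows essentially the same route the paper intends: the lemma is stated there as a direct consequence of the divergence-form equations satisfied by local vertical and horizontal $\H$-graphs together with the interior and boundary Hopf maximum principles, with the details omitted. Your write-up simply fills in that standard argument (linearizing the operator along the segment between the two graphs, observing that no zeroth-order term appears because $\H$ enters only through the angle function, i.e.\ through the gradient, and concluding with an open-and-closed argument), which is exactly the justification the paper relies on.
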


This geometric maximum principle is commonly used along with the isometries of the ambient space. Given $\hh$, $\sig$ an $\Hs$ and $\Phi$ an isometry of $\m2r$, if we ask $\Phi(\sig)$ to be an $\Hs$ for the same prescribed function $\H$, then by Equation \eqref{defHsup} $\Phi$ must keep invariant the angle function $\nu$ of $\sig$. 

For example, the translations of an immersed surface in $\m2r$ does not change the value of the coordinates of its unit normal, and thus it is immediate that any translation of an $\Hs$ is again an $\Hs$. Moreover, almost all the isometries of $\m2r$ will be also isometries for the class of immersed $\Hss$, as a straightforward consequence of the next lemma:

\begin{lem}\label{sime}
Given $\hh$, let be $\Sigma$ an $\H$-surface, $\eta:\Sigma\rightarrow T\m2r$ a unit normal vector field and $\Phi$ an isometry of $\m2r$ such that $\H \circ \Phi (y)=\H(y),\ \forall y\in [-1,1]$.
Then $\Sigma'=\Phi \circ \Sigma$ is an $H$-surface in $\m2r$ with respect to the orientation given by $\eta':=d\Phi(\eta)$.
\end{lem}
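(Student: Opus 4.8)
The plan is to track how the two geometric quantities appearing in Equation \eqref{defHsup}, namely the mean curvature $H_\Sigma$ and the angle function $\nu$, transform under the isometry $\Phi$. Since $\Sigma'=\Phi\circ\Sigma$ is the image of $\Sigma$ under an ambient isometry, $\Sigma$ and $\Sigma'$ share the same intrinsic and extrinsic geometry pointwise, so the only point requiring real care is the behaviour of the angle function, which is defined using the privileged vertical field $e_3$.

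First I would record how $\Phi$ acts on $e_3$. Because the two factors of $\m2r$ have different (constant) curvatures, the de Rham splitting is preserved by every isometry; equivalently, using the description of the isometry group recalled above, every $\Phi$ restricts on the $\R$-factor to an affine map $t\mapsto \pm t + c$. Hence $d\Phi(e_3)=\epsilon\, e_3$ for some $\epsilon\in\{+1,-1\}$, with $\epsilon=+1$ exactly when $\Phi$ preserves the vertical orientation. The induced action on the angle argument is thus $y\mapsto \epsilon y$, so the hypothesis $\H\circ\Phi=\H$ is to be read as $\H(\epsilon y)=\H(y)$ for all $y\in[-1,1]$; this is vacuous when $\epsilon=+1$ and amounts to $\H$ being even when $\epsilon=-1$.

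Next I would compute the angle function of $\Sigma'$ at a point $q=\Phi(p)$ carrying the normal $\eta'_q=d\Phi(\eta_p)$. Pairing against $d\Phi(e_3)=\epsilon e_3$ gives $\langle d\Phi(\eta_p), d\Phi(e_3)\rangle = \epsilon\,\nu'_q$, while isometry invariance of the metric gives $\langle d\Phi(\eta_p), d\Phi(e_3)\rangle = \langle \eta_p, e_3\rangle = \nu_p$; hence $\nu'_q=\epsilon\,\nu_p$. For the mean curvature, the key step is the standard fact that an ambient isometry conjugates shape operators: with the transported normal $\eta'$ one has $A'_q=d\Phi_p\circ A_p\circ (d\Phi_p)^{-1}$, so taking traces yields $H_{\Sigma'}(q)=H_\Sigma(p)$. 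Combining these with the $\H$-surface equation for $\Sigma$, I obtain $H_{\Sigma'}(q)=H_\Sigma(p)=\H(\nu_p)=\H(\epsilon\,\nu'_q)=\H(\nu'_q)$, where the last equality is exactly the hypothesis $\H(\epsilon y)=\H(y)$. As $q$ is arbitrary, $\Sigma'$ satisfies Equation \eqref{defHsup} with orientation $\eta'$, i.e.\ it is an $\H$-surface. I expect the only genuine subtlety to be the determination of $\epsilon$ and the correct reading of the hypothesis $\H\circ\Phi=\H$; once $d\Phi(e_3)=\pm e_3$ is in hand, the invariance of the mean curvature under isometries together with the transformation $\nu'=\epsilon\,\nu$ makes the conclusion immediate.
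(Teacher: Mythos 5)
Your proposal is correct and is precisely the argument the paper has in mind (the paper omits the proof of Lemma \ref{sime} as straightforward, but the surrounding discussion --- e.g.\ that horizontal reflections change the sign of $\nu$ and hence require $\H$ even --- confirms this is the intended reasoning). You correctly isolate the three needed facts: every isometry of $\m2r$ with $\kappa\neq 0$ preserves the splitting so that $d\Phi(e_3)=\pm e_3$, the mean curvature is invariant under ambient isometries with the transported normal, and the hypothesis $\H\circ\Phi=\H$ is exactly the compatibility $\H(\epsilon y)=\H(y)$ needed to absorb the sign in $\nu'=\epsilon\nu$.
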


In particular, as the angle function $\nu$ on each surface is invariant under the rotations around any vertical axis, Equation \eqref{defHsup} implies that the class of $\Hss$ contains the rotations around each vertical line as a uniparametric group of isometries. As a matter of fact, the class of $\Hss$ also contains the reflections with respect to vertical planes as isometries. For the particular case that $\H$ is an even function, the reflections w.r.t. horizontal planes are also isometries; these reflections changes the sign of $\nu$ but $\H$ takes the same value on $\nu$ and $-\nu$, inducing the corresponding reflection as an isometry.

The last two results in this section will be the cornerstone in the proof of further results. Both are inspired by the ideas developed in the proof of the main theorem in \cite{RST}, where the authors obtained a general bound for the second fundamental form and the diameter of an immersed CMC surface. These results were also extended to the case of $\Hss$ in $\r3$ \cite{BGM} and for CMC surfaces in the homogeneous spaces $\nil$ and $\psl$ \cite{Bue2}. The proof of both results are quite similar to the ones exposed in \cite{Bue2,BGM}, and this is the reason for omitting them.

The first one is a uniform curvature estimate for a sequence of compact $\Hgg$, which ensure that those graphs cannot have \emph{blowing second fundamental form}, or equivalently, blowing curvature. 
\begin{lem}\label{estuniforme}
Let be $\hh,\ \Pi$ a horizontal plane and $\{\sig_n\}$ a sequence of compact vertical $\H$-graphs in $\m2r$, with $\partial \sig_n\subset\Pi$ and with heights to $\Pi$ tending to infinity, and fix $d>0$. Then, there exists a positive constant $\Lambda$ such that the uniform estimate holds:
$$
|\sigma_{\sig_n}(p)|<\Lambda,\ \forall p\in \sig_n^*,\ \forall n\in\N,
$$
where $\sigma_{\sig_n}$ denotes the second fundamental form of each $\sig_n$ and $\sig_n^*$ is the subset of $\sig_n$ defined as
$$
\sig_n^*=\{p\in \sig_n;\ d(p,\partial \sig_n)>2d\}.
$$
\end{lem}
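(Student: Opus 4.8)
The plan is to argue by contradiction through a blow-up analysis, following the scheme of \cite{RST} adapted to the product ambient. Suppose the conclusion fails: then, after passing to a subsequence, there are points $p_n\in\sig_n^*$ with $|\sigma_{\sig_n}(p_n)|\to\infty$. First I would run a Schoen-type point-selection argument: replacing $p_n$ by a nearby maximizer of the product of $|\sigma_{\sig_n}|$ with the distance to the boundary of the good region, one obtains points $q_n\in\sig_n^*$ with $\lambda_n:=|\sigma_{\sig_n}(q_n)|\to\infty$ such that, on the intrinsic ball of radius $r_n/\lambda_n$ about $q_n$ (with $r_n\to\infty$), the second fundamental form is bounded by $2\lambda_n$. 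Since $q_n\in\sig_n^*$ its distance to $\partial\sig_n$ exceeds $2d$, so this ball avoids $\partial\sig_n$; I will use this to guarantee completeness of the blow-up limit.

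Next I would normalize the basepoints and rescale. Using vertical and horizontal translations together with a vertical rotation --- all of which preserve the angle function and hence, by Lemma \ref{sime}, the class of $\Hss$ --- I move each $q_n$ to a fixed reference point $p_0\in\m2r$ with $\eta_n(q_n)$ sent to a fixed direction. I then dilate the ambient metric by the factor $\lambda_n^2$. The crucial point is that $\m2r$ carries no homotheties, so this dilation does not preserve the geometry; instead, since the sectional curvatures of $(\m2r,\lambda_n^2 g)$ scale by $\lambda_n^{-2}\to 0$, the rescaled ambient converges in $C^\infty$ on compact sets around $p_0$ to flat $\r3$. Under this dilation the rescaled surfaces $\hat\sig_n$ satisfy $|\sigma_{\hat\sig_n}(p_0)|=1$ and $|\sigma_{\hat\sig_n}|\le 2$ on balls of radius $r_n$, while their mean curvature, uniformly bounded by $\max_{[-1,1]}|\H|$, is divided by $\lambda_n$ and therefore tends to $0$. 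Because $d(q_n,\partial\sig_n)>2d$, the rescaled distance to the boundary is at least $2d\lambda_n\to\infty$, so no boundary survives over any fixed compact set.

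With the uniform curvature bound I would apply the standard compactness for surfaces with bounded second fundamental form (writing $\hat\sig_n$ locally as graphs with uniform $C^{2,\alpha}$ bounds) to extract a subsequence converging to a limit surface $\sig_\infty\subset\r3$. By the previous estimates $\sig_\infty$ is complete, embedded as a limit of graphs, minimal since the mean curvature passed to $0$, and nonflat since $|\sigma_{\sig_\infty}(p_0)|=1$. To reach the contradiction I would exploit the graphical hypothesis through the angle function: each $\sig_n$ being a vertical graph has $\nu_n>0$, so the limit angle function $\nu_\infty=\langle\eta_\infty,e_3\rangle\ge 0$, where $e_3$ is now a parallel field in $\r3$. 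Since for a minimal surface in $\r3$ the function $\langle\eta_\infty,e_3\rangle$ is a Jacobi field, it solves $\Delta\nu_\infty+|\sigma_{\sig_\infty}|^2\nu_\infty=0$, whence $\Delta\nu_\infty\le 0$ with $\nu_\infty\ge 0$. By the strong minimum principle either $\nu_\infty>0$ everywhere, so $\sig_\infty$ is a complete minimal graph over a plane and hence flat by Bernstein's theorem, or $\nu_\infty$ vanishes at an interior point and therefore $\nu_\infty\equiv 0$, so $\sig_\infty$ is invariant under vertical translations, i.e. a product $\gamma\times\R$ which, being minimal, forces $\gamma$ to be a straight line and $\sig_\infty$ a plane. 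Either alternative contradicts $|\sigma_{\sig_\infty}(p_0)|=1$, proving the lemma.

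The main obstacle is the second step: because the product space admits no dilations, one cannot rescale the surfaces inside a fixed ambient as in $\r3$. I must instead rescale the ambient metric itself and verify carefully the $C^\infty$ convergence of $(\m2r,\lambda_n^2 g)$ to $\r3$, together with the convergence of all derived quantities --- the second fundamental form, the mean curvature vanishing in the limit, and the Jacobi operator governing $\nu$ --- so that the limit $\sig_\infty$ is genuinely a minimal surface in Euclidean space on which the Euclidean Bernstein and maximum-principle arguments apply.
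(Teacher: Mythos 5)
Your proposal is correct and takes essentially the same route as the paper, which omits the proof of Lemma \ref{estuniforme} precisely because it is the blow-up argument of \cite{RST} as adapted in \cite{Bue2,BGM}: point selection, rescaling of the ambient metric so that $(\m2r,\lambda_n^2 g)$ converges to Euclidean space, and a complete limit minimal surface with $\nu_\infty\geq 0$ and $|\sigma_{\sig_\infty}(p_0)|=1$ that the angle-function dichotomy forces to be a plane. The only cosmetic point is that when $\nu_\infty>0$ the surface is a priori only a local graph, so one should conclude flatness from the existence of a positive Jacobi field (stability plus the do Carmo--Peng / Fischer-Colbrie--Schoen theorem) or from Osserman's theorem on Gauss maps omitting an open set, rather than literally from Bernstein's theorem.
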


This curvature estimate implies the following \emph{compactness theorem} for the space of $\Hss$ in $\m2r$ with bounded second fundamental form. In some sense, this result allows us to take limit in a sequence of $\Hss$ with bounded curvature, which has some limit point in $\m2r$ as an accumulation point. 

\begin{teo}\label{compa}
Let $(\Sigma_n)_n$ be a sequence of $\H_n$-surfaces in $\m2r$ for some sequence of functions $\H_n\in C^k([-1,1])$, $k\geq 1$, and take $p_n\in \Sigma_n$. Assume that the following conditions hold:
 \begin{enumerate}
\item[1.] There exists a sequence of positive numbers $r_n\to\infty $ such that the geodesic disks $D_n= D_{\Sigma_n} (p_n, r_n)$ are contained in the interior of $\Sigma_n$, i.e. $d_{\Sigma_n} (p_n,\parc \Sigma_n) > r_n$.
 
\item[2.] The sequence $(p_n)_n $ has some $p_\infty\in\m2r$ as accumulation point.

\item[3.] If $|\sigma_n |$ denotes the length of the second fundamental form of $\Sigma_n$, then there exists a constant $C>0$ such that $| \sigma_n | (x) \leq C$ for every $n$ and every $x \in \Sigma_n$.

\item[4.] $\H_n \to \H_\infty$ in the $C^k$ topology to some $\H_\infty\in C^1([-1,1])$.
\end{enumerate}
Then, there exists a subsequence of $(\Sigma_n)_n$ that converges uniformly on compact sets in the $C^{k+2}$ topology to a complete, possibly non-connected, $\H_\infty$-surface $\Sigma_\infty$ of bounded curvature that passes through $p_\infty$.
\end{teo}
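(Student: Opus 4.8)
The plan is to establish the compactness theorem via the standard strategy of local graphical representations plus elliptic estimates, organized around the bounded-curvature hypothesis. The key geometric point is that a uniform bound $|\sigma_n|\leq C$ on the second fundamental form forces, via the standard tubular-neighborhood argument, a uniform lower bound on the size of the graphical neighborhoods: there exist $r_0>0$ and $\delta>0$, depending only on $C$ and the geometry of $\m2r$, such that for each $n$ and each $x\in\Sigma_n$ with $d_{\Sigma_n}(x,\partial\Sigma_n)>r_0$, the intrinsic disk $D_{\Sigma_n}(x,r_0)$ is a graph of bounded geometry over a disk of radius $\delta$ in the tangent plane $T_x\Sigma_n$. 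This is where hypotheses (1) and (3) enter jointly: (1) guarantees that the balls $D_n=D_{\Sigma_n}(p_n,r_n)$ eventually contain any fixed intrinsic ball around $p_n$, and (3) gives the uniform curvature bound that makes the local graphs uniformly controlled.

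First I would use hypothesis (2) to fix a subsequence with $p_n\to p_\infty\in\m2r$, and choose normal coordinates for $\m2r$ centered near $p_\infty$. Exploiting the bounded second fundamental form together with Codazzi-type equations in $\m2r$, I would obtain a priori bounds on the full covariant derivative $\nabla\sigma_n$ on interior subsets, so that each local graph function $u_n$ over a fixed disk satisfies a uniform $C^2$ bound. Here the prescribed-mean-curvature equation is crucial: each $u_n$ solves the quasilinear elliptic divergence-form equation displayed before Lemma~\ref{pmax}, with prescribing function $\H_n$. Because hypothesis (4) gives $\H_n\to\H_\infty$ in $C^k$, the coefficients of these PDEs converge in $C^{k-1}$ while remaining uniformly elliptic (the ellipticity is governed by the uniform $C^1$ bounds on $u_n$). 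Applying interior Schauder estimates to this elliptic equation upgrades the uniform $C^2$ bound to a uniform $C^{k+2,\alpha}$ bound on each $u_n$ over slightly smaller disks.

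With uniform $C^{k+2,\alpha}$ estimates in hand, the Arzelà–Ascoli theorem yields, by a diagonal argument over an exhausting sequence of intrinsic compact sets, a subsequence of the local graphs converging in $C^{k+2}$ on compact sets to a limiting graph $u_\infty$. Passing to the limit in the divergence-form PDE shows that $u_\infty$ solves the $\H_\infty$-surface equation, so the assembled limit $\Sigma_\infty$ is an $\H_\infty$-surface. Completeness of $\Sigma_\infty$ follows because $r_n\to\infty$ in hypothesis (1) allows the intrinsic balls to exhaust the limit, and the uniform lower bound on the graph radii prevents the surface from terminating at finite intrinsic distance; the bound $|\sigma_\infty|\leq C$ is inherited by $C^2$ convergence. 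I would remark that $\Sigma_\infty$ may be non-connected precisely because distinct graphical sheets accumulating near $p_\infty$ can converge to different components.

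The main obstacle is ensuring that the local graphical patches glue consistently into a single immersed surface and that the limit is genuinely complete rather than a limit lamination with gaps. Controlling this requires the uniform radius $\delta$ of the graphical neighborhoods to be independent of $n$ and of the base point—this is exactly the content of the bounded-curvature hypothesis (3)—and then tracking overlaps so that the transition functions also converge. Since this gluing-and-completeness step is precisely the part carried out in detail for CMC surfaces in \cite{RST} and for $\Hss$ in $\r3$ in \cite{BGM}, and the present situation differs only in the ambient geometry of $\m2r$ and the convergence of the prescribing functions $\H_n$ (handled by hypothesis (4)), I expect it to follow the same lines with only notational modifications.
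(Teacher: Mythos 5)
Your argument is the standard compactness-via-uniform-graphs scheme (uniform graphical radius from the curvature bound, Schauder bootstrapping on the prescribed mean curvature equation, Arzel\`a--Ascoli with a diagonal argument), which is exactly the proof the paper points to: the author omits the proof of Theorem \ref{compa}, stating it is ``quite similar to the ones exposed in \cite{Bue2,BGM}'' (themselves adaptations of \cite{RST}), and those are precisely the arguments you reproduce. One small remark: the uniform $C^2$ bound on the local graph functions follows directly from $|\sigma_n|\leq C$ together with the uniform gradient bound on the graphical patches, so the detour through Codazzi-type bounds on $\nabla\sigma_n$ is unnecessary (and would not follow from the stated hypotheses alone); higher regularity should come solely from elliptic regularity applied to the equation, as you do in the next step.
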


\section{\large A uniform height estimate for compact graphs}\label{secuniformheight}
\vspace{-.5cm}

In the next definition we formalize the notion of height estimate in the class of prescribed mean curvature graphs.

\begin{defi}\label{uhai}
Let be $\hh$, and suppose that $v$ is either a horizontal or a vertical unit vector. We will say that there exists a \emph{uniform height estimate for $\H$-graphs in the $v$-direction} if there exists a constant $C(\H,v,\kappa)>0$ such that the following assertion is true:

For any graph $\Sigma$ in $\m2r$ of prescribed mean curvature $\H$ oriented towards $v$ (i.e. $\langle \eta,v\rangle>0$ on $\Sigma$ where $\eta$ is the unit normal of $\Sigma$), and with $\parc \Sigma$ contained in the plane $\Pi=v^{\bot}$, it holds that the distance of any $p\in \Sigma$ to $\Pi$ is at most $C(\H,v,\kappa)$.
\end{defi}

Note that the concept of uniform height estimate in a horizontal direction only makes sense in the space $\h2r$, since the base of the space $\s2r$ is compact and thus horizontal graphs automatically satisfy a horizontal height estimate.  For the space $\h2r$, since rotations with respect to a vertical line are isometries for the class of immersed $\Hs$, it follows that the constant $C(\H,v,\kappa)$ in Definition \ref{uhai} is independent of the choice of the horizontal direction $v$ fixed. In this situation, we may refer simply to \emph{horizontal height estimates}. 

For the case that $\H$ is an even function, the notion of vertical uniform height estimate and the constant $C(\H,\pm e_3,\kappa)$ are also independent of the direction $e_3$ or $-e_3$ chosen. Therefore, for a fixed $\hh$, the constant $C(v,\kappa)$ only depends on the space where the graph is defined and on the vertical or horizontal direction on which we define the graph. We will denote by $C_\textbf{h}$ to the constant in Definition \ref{uhai} for the class of horizontal $\Hgg$ in $\h2r$. Since we will only focus on even functions in the study of vertical height estimates, we will denote just by $C^\kappa$ to the constant in Definition \ref{uhai} if the graph is vertical.

Let us introduce some notation for the space $\h2r$, for which we will use the \emph{Poincaré disk model} of $\mathbb{H}^2$. Let $\Pi$ be a vertical plane in $\h2r$ passing through the origin \textbf{o}, and consider a complete horizontal geodesic $\sigma(t)$ in $\mathbb{H}^2\equiv\mathbb{H}^2\times\{0\}$ such that $\sigma(0)=\textbf{o}\in\Pi$ and $\sigma'(0)\bot\Pi$. The \emph{oriented foliation of vertical planes along $\sigma$} is the family of vertical planes $\Pi_t$ at oriented distance $t$ to $\Pi$, such that $\Pi_0=\Pi$ and $\sigma'(t)\bot\Pi_t,\ \forall t$. Those planes are obtained as the image of $\Pi$ under the translations corresponding to the flow generated by a horizontal Killing vector field, having $\sigma(t)$ as integral curve.

The next two results are original from the pioneer work of Meeks \cite{Mee} for CMC surfaces in the Euclidean space $\r3$. The ideas developed were also adapted for the case of surfaces in $\h2r$ with constant extrinsic curvature, see the proof of Theorem 6.2 in \cite{EGR}. Here we extend both results for the case of $\Hss$ in $\m2r$. For the first we omit its proof, and for the second we just give a sketch of it.

\begin{lem}\label{lemi}
Let be $\hi$ and $\Sigma$ an $\H$-graph in $\m2r$ defined over a closed (not necessarily bounded) domain contained in an either vertical or horizontal plane $\Pi$, with zero boundary values. Denote by $d$ to the diameter of the sphere $S_{H_0}$ of constant mean curvature $0<H_0<\min\H$ in $\m2r$. Then, for every $t>2d$, the diameter of each connected component of $\Sigma\cap \Pi_t$ is at most $2d$.
\end{lem}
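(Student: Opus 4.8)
The plan is to use the rotationally invariant constant mean curvature sphere $S_{H_0}$, which exists for some $H_0$ with $0<H_0<\min\H$ precisely because $\hi$, as a moving barrier, exploiting that every translation of $\m2r$ is an isometry for the class of $\H$-surfaces (Lemma \ref{sime}). The first step is to record the \emph{one-sided comparison principle} that the strict inequality $H_0<\min\H$ provides. Writing a piece of $\Sigma$ and of a translated copy of $S_{H_0}$ as graphs over a common tangent plane in the direction of a shared unit normal, both satisfy the divergence form of \eqref{defHsup}, with right-hand sides $2\H(\nu)\ge 2\min\H$ and $2H_0$ respectively. Since $\min\H>H_0$, the Hopf maximum principle underlying Lemma \ref{pmax} yields the following obstruction: $\Sigma$ can never be tangent to $S_{H_0}$ at an interior point, with coinciding unit normals, while lying locally \emph{outside} the ball bounded by $S_{H_0}$. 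Equivalently, a copy of $S_{H_0}$ that is brought into first contact with $\Sigma$ from the mean-convex (concave) side of $\Sigma$ produces an immediate contradiction.

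Next I would argue by contradiction. Fix $t_0>2d$ and suppose a connected component $C$ of $\Sigma\cap\Pi_{t_0}$ had diameter larger than $2d$. Since $\Sigma$ is a graph with zero boundary values, $C$ bounds inside $\Pi_{t_0}$ a region $D$ carrying the \emph{cap} $K$ (the component of $\Sigma\cap\{t>t_0\}$ with $\partial K=C$). Choosing $a,b\in C$ with $\mathrm{dist}(a,b)>2d$ and looking near the midpoint $m$ of a minimizing geodesic of $\Pi_{t_0}$ joining them, the diameter excess leaves room beneath $K$ to seat a copy of $S_{H_0}$: since $K$ has mean curvature at least $\min\H>H_0$, it bends away from $\Pi_{t_0}$ strictly faster than the $H_0$-cap of $S_{H_0}$ on the same footprint, so this copy of $S_{H_0}$ can be started below $K$ and disjoint from it and then translated in the direction of increasing $t$ until a first contact with $K$. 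Because $S_{H_0}$ has diameter $d$ and $t_0>2d$, the whole motion stays in $\{t>0\}$, so this first contact is at an interior point $q$ of $\Sigma$, away from $\partial\Sigma\subset\Pi_0$; there the unit normals coincide and, the sphere having approached from the concave side of $\Sigma$, the surface lies outside the ball bounded by $S_{H_0}$—exactly the configuration excluded in the first step. This contradiction shows $\mathrm{diam}(C)\le 2d$.

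The step I expect to be the main obstacle is the geometric bookkeeping that makes the sweeping argument rigorous: namely, extracting from the single hypothesis $\mathrm{diam}(C)>2d$ enough room beneath the cap to seat a barrier sphere of diameter $d$ and raise it to a genuine first contact, and then checking that this contact is (i) interior, which is what the threshold $t_0>2d$ guarantees by keeping the whole barrier inside $\{t>0\}$ and hence off $\partial\Sigma$; (ii) tangential with coinciding normals rather than a transverse crossing; and (iii) on the forbidden \emph{outer} side of $S_{H_0}$ rather than the harmless inner side, which is where the correct choice of orientation for $\eta$—so that the mean curvature vector of $\Sigma$ points to its concave side and agrees with the inner normal of $S_{H_0}$—is essential, and where the strict inequality $H_0<\min\H$ turns the comparison into a strict obstruction. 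The argument is insensitive to whether $\Pi$ is vertical or horizontal, because in both cases the planes $\Pi_t$ are totally geodesic and $S_{H_0}$, being strictly convex, may be translated into the required tangency from either side.
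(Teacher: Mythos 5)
The paper does not actually prove this lemma: it is stated with the remark that the proof is omitted, being ``original from the pioneer work of Meeks'' (and adapted in \cite{EGR}), so there is nothing internal to compare against. Your overall strategy --- the tangency principle for the sphere $S_{H_0}$ with $H_0<\min\H$, plus a sweeping-to-first-contact argument --- is certainly the intended one. The contradiction mechanism (interior tangency with coinciding normals, $\Sigma$ on the wrong side of $S_{H_0}$, strict inequality $H_0<\min\H$) is correctly set up.

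The genuine gap is the step you yourself flag: ``the diameter excess leaves room beneath $K$ to seat a copy of $S_{H_0}$.'' This does not follow from $\mathrm{diam}(C)>2d$. First, a planar region $D$ of diameter greater than $2d$ need not contain any geodesic disk of radius $d/2$ (large diameter does not bound the inradius from below --- $D$ could be long and thin), so there may be no horizontal room for the barrier anywhere over $D$, let alone near the midpoint $m$ of the geodesic joining $a$ and $b$, which $D$ need not even contain. Second, even over a wide footprint, the cap $K$ could a priori stay within height $\varepsilon\ll d$ of $\Pi_{t_0}$, leaving no vertical room between $D$ and $K$; your justification that $K$ ``bends away from $\Pi_{t_0}$ strictly faster than the $H_0$-cap of $S_{H_0}$ on the same footprint'' is an unproven global comparison of caps, not a consequence of the pointwise tangency principle, and making it rigorous is essentially equivalent to the lemma itself. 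A symptom that something is off: your argument never really uses $t>2d$ (the barrier lives in $\{s>t_0\}$, so it is automatically off $\partial\Sigma\subset\Pi_0$ for any $t_0>0$), whereas the threshold $2d$ is essential in Meeks' lemma --- near $\Pi_0$ the level sets can have arbitrarily large diameter. The correct argument must exploit the region $W$ under the graph \emph{below} the level $\Pi_{t_0}$, where the hypothesis $t_0>2d$ guarantees vertical depth greater than $2d$ under every point of the footprint, and must then resolve the horizontal-room problem by a genuine sweeping of the ball under the graph; this is where the real content of the lemma lies and where your proposal, as written, is incomplete.
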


\begin{pro}\label{estialturahorizontal}
Given $\H\in \mathfrak{C}^1_{-1}(I)$, there exists a uniform height estimate for horizontal, compact $\H$-graphs in $\h2r$.
\end{pro}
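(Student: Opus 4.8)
The plan is to argue by contradiction, reducing the problem to the analysis of a thin ``cap'' of the graph and then closing the argument with the mean–convex sphere $S_{H_0}$ as a barrier. Fix $H_0$ with $0<H_0<\min\H$, which exists because $\H\in\mathfrak{C}^1_{-1}(I)$ forces $\H>1/2>0$, and let $d$ be the diameter of the constant mean curvature sphere $S_{H_0}$ in $\h2r$. Suppose, contrary to Definition \ref{uhai}, that there is no uniform horizontal height estimate. Then there is a sequence of compact $\Hgg$ $\{\Sigma_n\}$ in $\h2r$, each defined over a domain of the vertical plane $\Pi_0=v^{\bot}$, oriented towards the horizontal direction $v=\sigma'(0)$, with $\partial\Sigma_n\subset\Pi_0$ and with heights $h_n=\max_{\Sigma_n}t\to\infty$, where $t$ denotes the signed distance to $\Pi_0$ along the foliation $\{\Pi_t\}$.

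First I would isolate the top of each graph. Let $p_n\in\Sigma_n$ realize the maximal height $h_n$, so that its tangent plane is $\Pi_{h_n}$ and $\eta(p_n)=v$. By Lemma \ref{lemi}, for every $t>2d$ each connected component of $\Sigma_n\cap\Pi_t$ has diameter at most $2d$; since $\Sigma_n$ is a graph and $\Pi_0\cong\R^2$, the component $\Sigma_n^+$ of $\Sigma_n\cap\{t\geq 2d\}$ containing $p_n$ is itself a horizontal graph over a domain $\Omega_n\subset\Pi_0$ whose diameter equals that of $\partial\Omega_n$ and is hence at most $2d$, rising from height $2d$ on $\partial\Sigma_n^+$ to $h_n$ at the interior apex $p_n$. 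Thus the solid region trapped on the concave side of $\Sigma_n^+$ above $\Pi_{2d}$ is contained in a tube of cross-section diameter $\leq 2d$ and height $h_n-2d$.

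The core of the argument is then a sliding-barrier step. Because $d(p_n,\partial\Sigma_n)\geq h_n-2d\to\infty$, the interior of $\Sigma_n^+$ has uniformly bounded second fundamental form by the curvature estimate of Lemma \ref{estuniforme} (in its horizontal version, obtained by the same method), so after translating $p_n$ to the origin $\mathbf{o}$ by a horizontal isometry the compactness Theorem \ref{compa}, applied to the constant sequence $\H_n\equiv\H$, produces a subsequential limit: a complete $\H$-surface $\Sigma_\infty$ through $\mathbf{o}$, tangent to $\Pi_0$ at $\mathbf{o}$ with $\eta(\mathbf{o})=v$, contained in $\{t\leq 0\}$, and all of whose slices $\Sigma_\infty\cap\Pi_\tau$ have components of diameter $\leq 2d$. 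In particular $\Sigma_\infty$ is a complete horizontal graph over a bounded domain of $\Pi_0$, and the open solid tube $U$ on its concave ($-v$) side is unbounded only in the $-t$ direction. I would then slide $S_{H_0}$ along $\sigma$ inside $U$, from $t=-\infty$ upwards, until it first touches $\Sigma_\infty$ at an interior point $q$ from the concave side; there the common outer normal is $\eta(q)$, $\Sigma_\infty$ lies on the $+\eta(q)$ side of $S_{H_0}$, and the geometric comparison principle underlying Lemma \ref{pmax} gives $\H(\nu_q)=H_{\Sigma_\infty}(q)\leq H_0$, contradicting $H_0<\min\H$. This forces the $h_n$ to be bounded, and tracking the constants in the direct (pre-limit) version of the argument yields an explicit $C_\textbf{h}$ of the order of $3d$.

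The hard part will be this first-contact step: guaranteeing that $S_{H_0}$ can be inserted on the concave side and that the first tangency occurs at an \emph{interior} point of $\Sigma_\infty$, rather than the sphere escaping through a possibly slender tube $U$. This is exactly where the uniform curvature bound and the hypothesis $\H\in\mathfrak{C}^1_{-1}(I)$ (equivalently $\H>1/2$, the critical value in $\h2r$) are decisive: the bound on the second fundamental form prevents arbitrarily sharp spikes, so the cross-sections of $U$ cannot degenerate and must contain a ball of radius comparable to $d$, while $\H>1/2$ is what ultimately rules out a complete horizontal $\H$-graph over a bounded planar domain. For these points I would follow closely the barrier construction in the proof of Theorem 6.2 of \cite{EGR}.
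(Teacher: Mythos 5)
Your contradiction setup and the use of Lemma \ref{lemi} to confine the graphs to a horizontal Killing cylinder match the paper, but from there you take a genuinely different route, and it has a gap at exactly the step you flag as ``the hard part''. The paper does not pass to a limit at all: once the graphs are cylindrically bounded, it applies the Alexandrov reflection technique with respect to the foliation of vertical planes orthogonal to a horizontal direction $v$ tilted by $\pi/4$ from the graphing direction $w$, exactly as in Meeks \cite{Mee} and Theorem 6.2 of \cite{EGR}. This is legitimate because reflections across arbitrary vertical planes preserve the class of $\Hss$ by Lemma \ref{sime}, and the existence of totally geodesic planes \emph{tilted} with respect to the graphing direction is precisely the structural feature that distinguishes the horizontal case from the vertical one (the paper stresses this right after Corollary \ref{estialturahorizontalcompac}). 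You instead transplant the strategy of Theorem \ref{estialturavertical} (curvature estimate, compactness, limit surface, barrier), which the paper reserves for the vertical case where tilted reflections are unavailable.

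The gap: your final contradiction requires inserting $S_{H_0}$ into the region $U$ on the concave side of $\Sigma_\infty$ and sliding it to a \emph{first interior contact}. For this the cross-sections of $U$ must contain a geodesic ball at least as large as $S_{H_0}$; otherwise the sphere meets $\Sigma_\infty$ at every position along the axis and the sliding never starts, or the first contact is not interior and the comparison gives nothing. Lemma \ref{lemi} only bounds the \emph{diameter} of the slices by $2d$ from above; it gives no lower bound on their inradius, and neither does the uniform bound on the second fundamental form --- a curvature bound on the bounding surface does not prevent the enclosed cross-sections from being thin slivers, and even for convex sections it would only yield an inradius of order $1/\Lambda$, which has no reason to dominate the radius of $S_{H_0}$. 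In the vertical case the paper escapes this precisely because the limit cylinder is $\alpha\times\R$ with $\alpha$ of \emph{constant} geodesic curvature $2\H(0)$, hence a circle that can be explicitly nested inside the equator of the $\H$-sphere; in your horizontal setting the limit under horizontal translations is invariant under a horizontal Killing field whose angle function is not constant, so $\partial\Omega_\infty$ carries no constant-curvature structure and no such nesting is available. Secondary, more fixable issues: you invoke a ``horizontal version'' of Lemma \ref{estuniforme} and the global graph property of $\Sigma_\infty$ (which needs the analogue of \eqref{ecangulo} for $\langle\eta,K\rangle$ with $K$ a horizontal Killing field) without justification. The cleanest repair is to replace the whole limit-plus-barrier step by the tilted-plane Alexandrov reflection.
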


\begin{proofsk}
Arguing by contradiction, assume that such estimate does not hold. Then, there exists a sequence of horizontal, $\Hgg$ $\sig_n$ over domains $\Omega_n$ which can be supposed to lie in the same vertical plane $\Pi_w$ where their boundaries $\partial\sig_n=\partial\Omega_n$ lie, and such that the maximum distance from $\sig_n$ to $\Pi_w$ tends to infinity. Moreover, after intersecting the $\Hgg$ with a plane parallel to $\Pi_w$ at distance big enough, Lemma \ref{lemi} ensures us that the domains $\Omega_n$ satisfy a uniform diameter estimate, and thus they can be supposed to lie inside a bounded domain $\Omega_\infty$. We keep naming these intersected graphs by $\sig_n$.

In this situation, all the $\Hgg$ lie in the interior of the \emph{horizontal Killing cylinder} $\Omega_\infty(t)$. Let $v$ be a unitary horizontal vector which makes an angle of $\pi/4$ with $w$. As mentioned in Lemma \ref{sime}, we can apply Alexandrov reflection technique with respect to the family of vertical planes $\Pi_v$ orthogonal to $v$. 

Now, arguing as in Theorem 6.2 in \cite{EGR} making reflections with respect to planes $\Pi_v(t)$ orthogonal to that \emph{tilted direction} $v$, we ensure the existence of a positive constant $C_\textbf{h}=C(\H,-1)>0$, which is independent of $\sig$, such that the distance of each $p\in\sig$ to $\Pi_w$ is bounded by $C$. This proves Proposition \ref{estialturahorizontal}
\end{proofsk}

As we can reflect with respect to any vertical plane, the Alexandrov reflection technique allows us to extend the previous uniform height estimates for compact embedded $\Hss$ with boundary contained in a vertical plane.

\begin{cor}\label{estialturahorizontalcompac}
Given $\H\in \mathfrak{C}^1_{-1}(I)$, every compact, embedded $\H$-surface in $\h2r$ with boundary contained in a vertical plane $\Pi$ has distance to $\Pi$ bounded by $2C_\textbf{h}$, where $C_\textbf{h}$ is the constant appearing in the proof of Proposition \ref{estialturahorizontal}.
\end{cor}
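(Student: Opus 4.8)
The plan is to deduce the compact, embedded case from the graph estimate of Proposition \ref{estialturahorizontal} by the Alexandrov reflection technique, relying on two ingredients already available: that reflections with respect to vertical planes are isometries for the class of $\H$-surfaces (Lemma \ref{sime}), and that the interior and boundary maximum principle of Lemma \ref{pmax} holds. Fix the oriented foliation $\{\Pi_t\}_{t\in\R}$ of vertical planes at signed distance $t$ from $\Pi=\Pi_0$, with signed distance coordinate $s$ along the horizontal Killing field generating the foliation. Since $\Sigma$ is compact, it lies in a slab $\{|s|\le t^*\}$; treating the two sides of $\Pi$ separately, it suffices to bound $m:=\max_\Sigma s$ by $2C_\textbf{h}$, the other side following by applying the same argument to $-s$. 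I write $\Sigma_t^+=\Sigma\cap\{s\ge t\}$ for the cap beyond $\Pi_t$ and $(\Sigma_t^+)^*$ for its reflection across $\Pi_t$.

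Next I would run the reflection for $t$ decreasing from $m$. For $t$ close to $m$ the cap is small and $(\Sigma_t^+)^*$ does not meet $\Sigma\cap\{s<t\}$; let $t_0\in[0,m]$ be the first parameter at which contact occurs. The standard output of a successful reflection down to $t_0$ is that $\Sigma_{t_0}^+$ is a \emph{horizontal graph} over a domain of $\Pi_{t_0}$, with boundary $\Sigma\cap\Pi_{t_0}$ contained in $\Pi_{t_0}$; Proposition \ref{estialturahorizontal} then gives $m-t_0\le C_\textbf{h}$. It remains to bound $t_0$. If $t_0=0$ this is immediate. If $t_0>0$, the first contact is governed by the two Alexandrov alternatives. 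An interior tangency would force, via Lemma \ref{pmax}, symmetry of $\Sigma$ with respect to $\Pi_{t_0}$; but then $\partial\Sigma\subset\Pi_0$ would be carried onto $\Pi_{2t_0}$, contradicting $\partial\Sigma\subset\Pi_0$ and $t_0>0$. Hence contact occurs at a point $q\in\partial\Sigma\subset\Pi_0$, whose reflection $\tilde q\in\Sigma_{t_0}^+$ satisfies $s(\tilde q)=2t_0$, so the cap reaches $\Pi_{2t_0}$ and its height over $\Pi_{t_0}$ is at least $t_0$. Combining with the graph estimate yields $t_0\le C_\textbf{h}$, whence $m=(m-t_0)+t_0\le 2C_\textbf{h}$.

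The main obstacle I anticipate is justifying that the reflection procedure is legitimate in this non-isotropic ambient and produces exactly what the argument consumes: namely, that reflection across each $\Pi_t$ sends an $\H$-surface to an $\H$-surface for the same $\H$, which is Lemma \ref{sime} and, importantly, does not require $\H$ to be even because the reflecting planes are vertical; that the comparison at a first contact point is controlled by Lemma \ref{pmax}, applicable since both $\Sigma$ and its reflected cap are $\H$-surfaces; and the limiting step ensuring $\Sigma_{t_0}^+$ is still a graph at the critical parameter. Granting these, the only remaining care is the bookkeeping giving the factor $2$: it comes from adding the bound $t_0\le C_\textbf{h}$ on the location of the critical plane to the bound $m-t_0\le C_\textbf{h}$ on the height of the cap above it. Applying the identical reasoning to $-s$ bounds the distance on the other side of $\Pi$ and completes the estimate by $2C_\textbf{h}$.
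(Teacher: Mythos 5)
Your proposal is correct and is exactly the argument the paper has in mind: the paper gives no written proof of this corollary beyond the remark that Alexandrov reflection with respect to the vertical planes parallel to $\Pi$ (legitimate by Lemma \ref{sime}, with no evenness assumption on $\H$) extends the graph estimate of Proposition \ref{estialturahorizontal} to compact embedded surfaces, and your write-up — first contact analysis via Lemma \ref{pmax}, boundary contact forcing $t_0\le C_\textbf{h}$, plus the cap estimate $m-t_0\le C_\textbf{h}$ — is the standard implementation of that remark yielding the factor $2C_\textbf{h}$.
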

The cornerstone in the proof of Proposition \ref{estialturahorizontal} is that the symmetries in Equation \ref{defHsup} induced by $\H$ allows us to reflect with respect to \emph{tilted} planes to the direction of which $\sig$ is a graph, and the fact that $\sig$ can be supposed to be cylindrically bounded from a fixed height. 

When trying to obtain height estimates for vertical $\H$-graphs, we observe that they can be also supposed to be cylindrically bounded by means of Lemma \ref{lemi}, but there do not exist totally geodesic surfaces which are tilted with respect to the vertical direction. Thus, we need to develop new ideas in order to obtain vertical height estimates. The following result is inspired by the proof of Theorem 4.5 in \cite{BGM}. 

\begin{teo}\label{estialturavertical}
For each $\hipar$, there exists a uniform height estimate for vertical $\H$-graphs in $\m2r$.
\end{teo}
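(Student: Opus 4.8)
The plan is to argue by contradiction, extracting a limit of putative counterexamples by means of the compactness Theorem \ref{compa}. Assume that for some $\hipar$ the vertical height estimate of Definition \ref{uhai} fails. Then there is a sequence of vertical $\Hgg$ $\sig_n$ in $\m2r$, oriented towards $e_3$ (so that $\nu\geq 0$) and with $\parc\sig_n$ contained in a fixed slice $\M^2\times\{0\}$, whose distance to that slice tends to infinity. Writing $\sig_n$ as the graph of $u_n$ and using the divergence form of \eqref{defHsup} together with $\H(1)>0$ (forced by the inequality $4\H(y)>(1-\kappa)\sqrt{1-y^2}$ at $y=1$), each graph lies on one side of its boundary slice and attains its extreme height at an interior extremum $q_n$, where $\nu=1$. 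Applying Lemma \ref{lemi} and discarding the bounded piece near $\M^2\times\{0\}$, I may assume that each $\sig_n$ is cylindrically bounded, i.e. contained in a fixed solid vertical cylinder whose horizontal sections have diameter at most $2d$.

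Next I would normalize using the vertical translations, which are isometries of the class of $\Hss$: translate each $\sig_n$ so that $q_n$ lies in $\M^2\times\{0\}$. Since the $q_n$ remain in the fixed bounded cylinder, after passing to a subsequence they accumulate at some $q_\infty$. As $d(q_n,\parc\sig_n)\to\infty$, the curvature estimate of Lemma \ref{estuniforme} bounds the second fundamental forms uniformly on every fixed neighborhood of $q_n$, so Theorem \ref{compa} (with $\H_n\equiv\H$) produces a subsequence converging on compact sets to a complete $\H$-surface $\sig_\infty$ through $q_\infty$, of bounded curvature. By construction $\sig_\infty$ is cylindrically bounded, satisfies $\nu\geq 0$ with $\nu(q_\infty)=1$, lies on one side of the slice $\M^2\times\{0\}$ and is tangent to it at the interior extremum $q_\infty$; in particular $\sig_\infty$ is vertically unbounded and is not a vertical cylinder (for which $\nu\equiv 0$).

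The contradiction has to come from the \emph{evenness} of $\H$: for a general $\hi$ the examples of \cite{Bue3} show that complete, cylindrically bounded vertical $\Hgg$ really exist, so the horizontal symmetry must be used in an essential way. Exploiting that reflections across vertical planes are always isometries of the class of $\Hss$, I would first run Alexandrov's reflection technique with the family of vertical planes through $q_\infty$; since $\sig_\infty$ is proper and confined to a bounded cylinder, this should force $\sig_\infty$ to be rotationally symmetric about the vertical axis through $q_\infty$. Being a rotational $\H$-surface with an interior vertical extremum where $\nu=1$, $\sig_\infty$ is then governed by the phase-plane analysis of \cite{Bue3}, and here the hypothesis $\hipar$ enters decisively: the inequality $4\H(y)>(1-\kappa)\sqrt{1-y^2}$ together with the evenness of $\H$ is exactly the condition guaranteeing that the sphere-type profile emanating from a vertex closes up into the compact, embedded rotational $\H$-sphere rather than escaping to infinity inside a cylinder. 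Hence $\sig_\infty$ must coincide with this $\H$-sphere and be compact, contradicting that it is vertically unbounded, and this refutes the failure of the estimate.

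The step I expect to be the main obstacle is making the reflection arguments produce \emph{genuine contact}, so that the geometric maximum principle of Lemma \ref{pmax} can be invoked. Because $\sig_\infty$ is a monotone graph, reflecting a cap across a horizontal slice sends it to the same base region and forces no contact by itself; likewise, closing up the vertical-plane reflections on a non-compact surface requires good control of $\sig_\infty$ near infinity. The delicate point is therefore to combine the cylindrical boundedness, completeness and properness of $\sig_\infty$ with the even symmetry so that the moving-plane procedure reaches a first tangency with matching normals, while ruling out uniformly along the reflection the degenerate limit in which $\sig_\infty$ collapses onto a vertical cylinder $\gamma\times\R$ (excluded here by $\nu(q_\infty)=1$, but needing control throughout). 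Once this contact is secured, Lemma \ref{pmax} and the rotational classification of \cite{Bue3} close the argument.
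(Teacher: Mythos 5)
Your setup coincides with the paper's own proof step by step: the contradiction argument, the use of Lemma \ref{lemi} to confine the high pieces of the graphs to a fixed solid vertical cylinder, the normalization at the highest point $q_n$, and the application of Lemma \ref{estuniforme} and Theorem \ref{compa} to produce a complete limit $\H$-surface $\sig_\infty$ which, after the maximum principle for the angle function, is a proper vertical graph over a relatively compact domain $\Omega_\infty$, unbounded upwards, with $\nu_\infty>0$ and an interior point where $\nu_\infty=1$. Up to this point your proposal reproduces the paper's argument faithfully.

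The final contradiction is where your route departs and where the genuine gap lies. You propose to prove that $\sig_\infty$ is rotational by Alexandrov reflection in vertical planes and then invoke the phase-plane classification of \cite{Bue3}; but, as you yourself concede, the moving-plane method applied to the non-compact, cylindrically bounded $\sig_\infty$ is not justified: the reflected piece can approach $\sig_\infty$ asymptotically at infinite height without ever producing a genuine contact point with matching normals, so Lemma \ref{pmax} cannot be invoked, and you give no argument to exclude this degeneration. The paper avoids proving any symmetry of $\sig_\infty$. Instead, it translates $\sig_\infty$ vertically downwards along a sequence of points $a_n$ with $\nu_\infty(a_n)\to 0$ and heights tending to infinity, and extracts a second limit: the cylinder $\mathbf{C}=\alpha\times\R$ over $\alpha=\partial\Omega_\infty$, which is itself an $\H$-surface with $\nu\equiv 0$, whence $\kappa_\alpha=2\H(0)$. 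Comparing with the equator $\gamma$ of the rotational $\H$-sphere $S_\H$ (whose existence is exactly what the hypothesis $\hipar$ buys), the relation \eqref{relcurva} gives $\kappa_\gamma<\kappa_\alpha$, so $\Omega_\infty$ sits strictly inside the disk bounded by $\gamma$ after centering both curves. Sliding the lower hemisphere $S_\H^-$ vertically until it first touches $\sig_\infty$ then forces an \emph{interior} first contact of two upward-oriented $\H$-graphs, contradicting Lemma \ref{pmax}. To complete your own route you would have to carry out the asymptotic analysis needed to close the moving-plane argument at infinity; the simpler alternative is to replace that step by this cylinder-versus-sphere curvature comparison.
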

The fact that $\H$ belongs to the space of functions $\hipar$ is key in the proof of this result, as this hypothesis guarantees the existence of an $\H$-sphere $S_\H$ in the class of $\Hss$, see Theorem 3.8 in \cite{Bue3}. Indeed, if $\H$ does not satisfy the bound $4\H(y)>(1-\kappa)\sqrt{1-y^2}$ or if $\H$ is not an even function, then in Sections 4 and 5 in \cite{Bue3} we constructed examples of $\Hss$ that unable the existence of an $\H$-sphere. In contrast with other results, we cannot simply consider a CMC sphere with mean curvature lower than $\H$.

\begin{proof}
Arguing by contradiction, suppose that $\hipar$ is a function for which there is no uniform height estimate for vertical $\H$-graphs.

In this situation, there exists a sequence $(\Sigma_n)_n$ of $\H$-graphs with respect to the $e_3$-direction, oriented towards $e_3$, and with boundary $\partial \Sigma$ contained in the totally geodesic slice $\Pi=\{z=0\}$, and points $p_n\in \Sigma_n$ such that the height of $p_n$ over $\Pi$ is greater than $n$. Note that by the mean curvature comparison theorem, $\Sigma_n\subset \{z\leq 0\}$.

Take now $z_0>2d$, where $d$ is the diameter of the $\H$-sphere $S_\H$, and denote $\Sigma_n^*:=\Sigma_n\cap \{z \leq -2z_0\}$. By hypothesis, $p_n\in \Sigma_n^*$ for $n$ large enough. By Lemma \ref{lemi}, the connected component $\Sigma_n^0$ of $\Sigma_n^*$ that contains $p_n$ is compact, and contained inside a vertical solid cylinder. 

Let $q_n\in \Sigma_n^0$ be a point of maximum height of $\Sigma_n^0$, and let $\Sigma_n^1:=\Sigma_n^0-q_n$ denote the translation of $\Sigma_n^0$ that takes $q_n$ to the origin \textbf{o} in $\m2r$. By Lemma \ref{estuniforme}, the norms of the second fundamental form of the graphs $\Sigma_n^1$ are uniformly bounded by some positive constant $\Lambda>0$ that only depends on $d$ and $||\H||_{C^1(\S^2)}$, and not on $n$. Moreover, the distances in $\m2r$ from the origin to $\partial \Sigma_n^1$ diverge to $\infty$. Now, applying Theorem \ref{compa} we deduce that, up to a subsequence, there are smooth compact sets $K_n$ of the graphs $\Sigma_n^1$, all of them containing the origin and with horizontal tangent plane at it, and that converge uniformly in the $C^2$ topology to a complete $\H$-surface $\Sigma_{\infty}$ of bounded curvature that passes through the origin.

Let us define next $\nu_{\infty}:=\langle\eta_\infty,e_3\rangle$, where $\eta_\infty$ is the unit normal of $\Sigma_\infty$. Note that $\eta_\infty({\bf o})=1$. As all the graphs $\Sigma_n^1$ are oriented towards $e_3$, we deduce that $\nu_\infty\geq 0$ on $\Sigma_\infty$. Furthermore, it a well known fact (see \cite{Dan} for instance) that $\nu$ is a solution to an elliptic equation on $\sig$ of the form
\begin{equation}\label{ecangulo}
\Delta_\sig\nu+\langle X,\nabla_\sig\nu\rangle+q=0,
\end{equation}
where $\Delta_\sig,\nabla_\sig$ denote the Laplace-Beltrami and gradient operators on $\Sigma_{\infty}$, $X\in \X(\Sigma_{\infty})$, and $q\in C^2(\Sigma_{\infty})$.  By the maximum principle for \eqref{ecangulo}, and the condition $\nu_{\infty}\geq 0$, we conclude that either $\nu_{\infty} \equiv 0$ on $\Sigma_{\infty}$ (which cannot happen since $\nu_{\infty}({\bf o})=1$), or $\nu_{\infty}>0$ on $\Sigma_{\infty}$. Therefore, $\Sigma_{\infty}$ is a local vertical graph, i.e. for every $p\in \Sigma_{\infty}$ it holds that $T_p \Sigma_{\infty}$ is not a vertical plane in $\m2r$.

Once here, and since $\Sigma_{\infty}$ is a limit of compact pieces of the graphs $\Sigma_n^1$, it is clear that $\Sigma_{\infty}$ is itself a proper graph in $\m2r$ over a domain $\Omega_\infty\subset\{z=0\}$ oriented towards $e_3$. By construction, this graph has horizontal tangent plane at the origin, it has bounded second fundamental form, and lies entirely in the closed half-space $\{z\geq 0\}$. Moreover, since each $\Sigma_n^0$ lies inside a vertical solid cylinder, we deduce that all points of $\Sigma_{\infty}$ lie at a bounded distance in $\m2r$ from the vertical axis passing through the origin, and thus the domain $\Omega_\infty$ is relatively compact.

At this point, a similar argument to the one used in the proof of Theorem 4.5 in \cite{BGM} ensures us that there exists a sequence of points $a_n\in\sig_\infty$ such that $\nu_\infty(a_n)\rightarrow 0$ and $a_n^3\rightarrow\infty$, where $a_n^3$ denotes the height of the point $a_n$, and in a way that the sequence of vertically translated graphs $\sig_\infty^n=\sig_\infty-(0,0,a_n^3)$ converges uniformly on compact sets to the complete cylinder $\mathbf{C}=\alpha\times\R$ with prescribed mean curvature $\H$, where $\alpha=\partial\Omega_\infty$ is a closed curve. Moreover, it is clear that $\Omega_\infty$ is actually the inner region bounded by $\alpha$. As $\mathbf{C}$ has constant angle equal to zero, the mean curvature $H_\mathbf{C}$ is constant and equal to $\H(0)=\kappa_\alpha/2$, where $\kappa_\alpha$ is the geodesic curvature of $\alpha$

Now we stand in position to prove Theorem \ref{estialturavertical}. As the cylinder $\mathbf{C}$ and the $\H$-sphere $S_\H$ are both $\Hss$, their mean curvatures agree in the points with the same angle function. In particular, the principal curvatures $\kappa_1,\kappa_2$ of $S_\H$ at any point $p_0$ of the equator of $S_\H$ and the geodesic curvature of $\alpha$ satisfy

\begin{equation}\label{relcurva}
2H_{S_\H}(p_0)=\kappa_1(p_0)+\kappa_2(p_0)=2\H(0)=2H_\mathbf{C}=\kappa_\alpha.
\end{equation}

Suppose, after a vertical translation, that the sphere $S_\H$ is a bi-graph over a domain $\Omega_{S_\H}$ contained in the plane $\{z=0\}$. If we denote by $\gamma=S_\H\cap\{z=0\}$ to the equator of $S_\H$, its geodesic curvature satisfies $\kappa_\gamma=\kappa_1$ and Equation \ref{relcurva} ensures us that $\kappa_\gamma<\kappa_\alpha$.

Moreover, as both curves $\gamma$ and $\alpha$ have constant geodesic curvature, we can translate $\alpha$ and $\gamma$ to the origin of $\h2r$ such that both $\alpha$ and $\gamma$ are circumferences and with $\alpha\subset\gamma$. In particular, the solid region $\Omega_\infty\times\R$ is contained in the solid region $\Omega_{S_\H}\times\R$. Denote by $S^-_{\H}=\{p\in S_\H;\ \nu_p\geq 0\}$ to the lower bi-graph of $S_\H$ with the upwards orientation and fix $\varepsilon>0$.

Recall that the cylinder $\mathbf{C}=\alpha\times\R$ was obtained as the limit of vertical translations of the proper graph $\sig_\infty$ over the domain $\Omega_\infty$, and this domain has $\alpha$ as boundary. At this point, if we suppose that $\sig_\infty$ has horizontal tangent plane at the origin, it is clear that we can consider the vertical translations $\sig_\infty^t:=\sig_\infty-te_3$ until there exists some $t_0<0$ such that $\sig_\infty^{t_0}$ and $S_\H^-$ have a first contact point. As $\partial S_\H^-=\partial\Omega_\H$ and $\partial\Omega_\infty$ was strictly contained in $\Omega_\H$, it is clear that this first contact point must be an interior one. But the half-sphere $S_\H^-$ and $\sig_\infty$ are both upwards oriented $\H$-graphs, contradicting the maximum principle for $\Hss$. This concludes the proof of Theorem \ref{estialturavertical}
\end{proof}

As the prescribed functions $\hipar$ are even, Alexandrov's reflection technique with respect to horizontal planes has as direct consequence the following corollary.

\begin{cor}\label{estialturaverticalcompac}
Given $\hipar$, every embedded $\H$-surface in $\m2r$ with boundary contained in a horizontal plane $\Pi$ has distance to $\Pi$ bounded by $2C^\kappa$, where $C^\kappa$ is the constant appearing in the proof of Theorem \ref{estialturavertical}.
\end{cor}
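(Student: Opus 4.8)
The plan is to derive the estimate directly from the graph height estimate of Theorem \ref{estialturavertical} by an Alexandrov reflection with \emph{horizontal} planes. The reflections are admissible precisely because $\H$ is even: a horizontal mirror symmetry sends the angle function $\nu$ to $-\nu$, and since $\H(-y)=\H(y)$, Lemma \ref{sime} guarantees that it carries an $\Hss$ into an $\Hss$ for the \emph{same} $\H$. Fix a compact embedded $\H$-surface $\sig$ with $\parc\sig\subset\Pi=\{z=0\}$. I will first bound the maximal height $m:=\max_{\sig}z$ above $\Pi$, and then repeat the argument with the opposite vertical direction $-e_3$ to bound the depth of $\sig$ below $\Pi$.

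To bound $m$, consider the horizontal planes $\Pi_t=\{z=t\}$ and run Alexandrov reflection descending from $t=m$. For $t$ slightly below $m$ the cap $\sig\cap\{z\ge t\}$ is a small vertical graph over $\Pi_t$ whose reflection across $\Pi_t$ lies strictly on one side of $\sig$. I then decrease $t$, keeping track of the first value $t_0$ at which the reflected cap either develops an interior contact with $\sig$ or reaches, at its lowest level $z=2t-m$, the boundary plane $\{z=0\}$. As long as no interior contact occurs, the validity of the reflection for all $s\in(t,m)$ is equivalent to $\sig\cap\{z\ge t\}$ being a genuine vertical graph over $\Pi_t$.

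The key point, and the origin of the factor $2$, is that an interior contact cannot occur while $t_0>m/2$. Indeed, an interior tangency between $\sig$ and the reflection of its upper cap would force, by the maximum principle for $\Hss$ (Lemma \ref{pmax}), the whole of $\sig$ to be symmetric with respect to $\Pi_{t_0}$; but then the mirror image of $\parc\sig\subset\{z=0\}$ would sit in $\{z=2t_0\}$ with $2t_0>m$, contradicting $\sig\subset\{z\le m\}$. Hence the descending reflection proceeds at least until the reflected cap meets $\{z=0\}$, that is until $t_0=m/2$, and consequently $\sig\cap\{z\ge m/2\}$ is a vertical $\H$-graph over the horizontal plane $\Pi_{m/2}$ with boundary contained in $\Pi_{m/2}$.

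Now I apply the vertical height estimate to this cap. Since $\H$ is even, the constant $C^\kappa$ in Theorem \ref{estialturavertical} is independent of the vertical orientation $\pm e_3$, so it applies to the cap regardless of which of the two vertical orientations makes it an $\H$-graph: its height above $\Pi_{m/2}$, namely $m-m/2=m/2$, is at most $C^\kappa$, whence $m\le 2C^\kappa$. Running the identical argument with planes ascending from below and the orientation $-e_3$ bounds the depth of $\sig$ under $\Pi$ by $2C^\kappa$ as well, and combining the two bounds shows that every point of $\sig$ lies within distance $2C^\kappa$ of $\Pi$, as claimed. The main technical point to be careful about is the standard but delicate verification that, in the absence of interior contact, the descending reflection keeps $\sig\cap\{z\ge t\}$ a graph (no vertical tangent planes appear in its interior) down to $t_0=m/2$, so that Theorem \ref{estialturavertical} is genuinely applicable; this is exactly where embeddedness of $\sig$ and the confinement of $\parc\sig$ to a single horizontal plane are used.
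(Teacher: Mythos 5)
Your proposal is correct and follows essentially the same route the paper intends: the paper omits the proof, stating only that the corollary is a ``direct consequence'' of Alexandrov reflection with respect to horizontal planes (admissible because $\H$ is even), which is exactly the argument you carry out, including the standard reflection-until-$t_0=m/2$ step that produces the factor $2$ and the application of Theorem \ref{estialturavertical} to the resulting cap. The only remark worth making is that you (reasonably) assume the surface is compact so that a highest point exists; the statement omits the word ``compact'' but the parallel Corollary \ref{estialturahorizontalcompac} includes it, so this is clearly the intended hypothesis.
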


\section{\large A structure-type result}\label{secestructura}
\vspace{-.5cm}
The following result is a classic result in the study of properly embedded CMC surfaces in $\r3$ and was firstly proved by Meeks, see \cite{Mee}. This result has been also generalized to other ambient spaces, see Lemma 2.4 in \cite{NeRo} for the case of CMC surfaces in $\h2r$, or to other curvatures of the surface, see Theorem 7.2 in \cite{EGR}. We announce this result for an arbitrary surface in $\m2r$ with an adequate lower bound on the mean curvature, as the proof is similar to the original by Meeks.

\begin{teo}[\textbf{Plane separation lemma}]\label{seplem}
Let $\Sigma$ be a surface with boundary in $\m2r$, diffeomorphic to the punctured closed disk $\overline{\D}-\{0\}$. Assume that $\Sigma$ is properly embedded, and that its mean curvature $H_{\Sigma}$ satisfies $H_{\Sigma}(p)\geq H_0$ for every $p\in \Sigma$, and for some $H_0>(1-\kappa)/4$.

Let be $d>0$ be the diameter of the sphere $S_{H_0}$ with constant mean curvature $H_0$, $P_1,P_2$ two disjoint planes (either vertical or horizontal) in $\m2r$ at a distance greater than $2d$, and $P^+_1,P^+_2$ the two disjoint half-spaces determined by these planes. Then, all the connected components of either $\Sigma\cap P^+_1$ or $\Sigma\cap P^+_2$ are compact.
\end{teo}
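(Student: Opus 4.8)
We have a properly embedded surface $\Sigma$ diffeomorphic to a punctured disk (so it's an annulus with one "inner" boundary circle and one end at the puncture), with mean curvature bounded below by $H_0 > (1-\kappa)/4$. Two parallel planes at distance $> 2d$ cut out two disjoint half-spaces. We want to show: in at least one of these half-spaces, all connected components of $\Sigma$ intersected with it are compact.

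Let me think about what's really going on and how Meeks' original proof works.\textbf{The plan} is to argue by contradiction, mimicking Meeks' original separation argument but using the tools assembled in the previous sections, namely the diameter bound of Lemma \ref{lemi} and the compactness/height machinery. Suppose that both intersections $\Sigma\cap P^+_1$ and $\Sigma\cap P^+_2$ contain a non-compact connected component, say $\Sigma_1\subset P_1^+$ and $\Sigma_2\subset P_2^+$. Since $\Sigma$ is properly embedded with a single end at the puncture, a non-compact component of $\Sigma\cap P_i^+$ must run out to that end; hence both $\Sigma_1$ and $\Sigma_2$ ``contain the end'' of $\Sigma$. The first key step is to make this precise: because $\Sigma$ has exactly one end (it is an annulus $\overline{\D}-\{0\}$), any two non-compact pieces of $\Sigma$ must eventually coincide near the puncture, so the end of $\Sigma$ accumulates simultaneously in $P_1^+$ and in $P_2^+$, which are \emph{disjoint}. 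This is the geometric tension I will exploit.

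\textbf{The main mechanism} is the foliation by parallel planes $P_t$ interpolating between $P_1$ and $P_2$ (the oriented foliation of planes introduced before Lemma \ref{lemi}), together with the uniform diameter estimate. Concretely, for each $t$ with $P_t$ lying strictly between $P_1$ and $P_2$ at distance greater than $2d$ from the boundary plane containing $\partial\Sigma$, Lemma \ref{lemi} (applied to the graph pieces of $\Sigma$ over the slab, or the extrinsic-curvature analogue of it) forces every connected component of the slice $\Sigma\cap P_t$ to have diameter at most $2d$. I would next track a non-compact component of $\Sigma$ running from $P_1^+$ toward $P_2^+$: as it crosses the intermediate planes, it sweeps out a connected ``tube'' whose cross-sections all have diameter $\leq 2d$, so the tube is contained in a solid cylinder of bounded radius. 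Properness then implies this tube cannot escape to infinity while staying in the slab, forcing it to exit through one of the two planes. The combinatorial heart of Meeks' argument is to count how the single end can distribute itself: if non-compact components existed in \emph{both} $P_1^+$ and $P_2^+$, the end would have to pass through the slab infinitely often, each passage producing a slice component of diameter $\leq 2d$, and embeddedness would force these passages to nest or to be disjoint, contradicting the finite topology (a punctured disk admits no such infinite nesting).

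\textbf{The main obstacle} I anticipate is the bookkeeping that converts ``one end'' into a contradiction once non-compact components are assumed on both sides. The diameter estimate of Lemma \ref{lemi} gives strong control on each \emph{individual} slice component, but to close the argument one must show that the single topological end cannot simultaneously feed a divergent family of bounded-diameter tubes reaching into both half-spaces; this requires a careful use of the Alexandrov-type embeddedness (no self-intersections) together with the Jordan-curve structure of the slices in each totally geodesic plane $P_t$ (vertical planes are isometric to $\R^2$ and slices to $\M^2$, so Jordan separation applies). I would handle this by assuming, for contradiction, a non-compact component in each half-space, extracting from the divergent slices a sequence of points escaping to the end with bounded-diameter neighborhoods, and applying Theorem \ref{compa} to produce a limit $\H_\infty$-surface; the limit would be a complete bounded-curvature surface trapped inside a solid cylinder and meeting both sides of the slab, which—by the same maximum-principle comparison with a model cylinder or sphere used in the proof of Theorem \ref{estialturavertical}—cannot exist. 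This reduction to a limiting object, rather than a purely topological count, is where I expect the real work to lie, since making the ``single end cannot split'' step rigorous without the limit argument is delicate.
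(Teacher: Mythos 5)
There is a genuine gap, and your route also diverges from the one the paper intends. The paper (following Meeks) proves this by a direct barrier argument: assuming non-compact components exist in both $P_1^+$ and $P_2^+$, one uses the annular topology of $\Sigma$ to extract a compact, \emph{simply connected} piece of $\Sigma$ whose boundary lies in one of the planes and which penetrates the slab to depth greater than $2d$; one then sweeps the constant mean curvature sphere $S_{H_0}$ (which exists precisely because $H_0>(1-\kappa)/4$) against this piece until a first \emph{interior} tangency, which contradicts the mean curvature comparison theorem. No limit surfaces, no slice-diameter control, and no curvature estimates are needed. Note also that the hypothesis is only $H_\Sigma\geq H_0$ --- $\Sigma$ is not assumed to be an $\H$-surface, let alone a graph --- so the only tool genuinely available is comparison with $S_{H_0}$.

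Your proposal has three concrete problems. First, the topological step is asserted rather than proved: two disjoint, non-compact closed subsurfaces can perfectly well coexist in a one-ended annulus (their boundaries need not be compact, and they need not ``coincide near the puncture''); the correct statement, which does the real work in Meeks' argument, is that the boundary curves of the two components lie in $P_1$ and $P_2$ respectively and at most one of the two families can contain a curve that is essential in the annulus $\overline{\D}-\{0\}$ --- this is what produces the simply connected piece. Second, you invoke Lemma \ref{lemi} to bound the diameter of the slices $\Sigma\cap P_t$, but that lemma applies to $\H$-\emph{graphs} with zero boundary values over a plane; here $\Sigma$ is an arbitrary properly embedded surface satisfying only $H_\Sigma\geq H_0$, so the diameter control is unjustified. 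Third, your fallback mechanism --- extracting a complete limit surface via Theorem \ref{compa} --- cannot be run: hypothesis 3 of that theorem demands a uniform bound on the second fundamental form, and the only curvature estimate in the paper (Lemma \ref{estuniforme}) is again restricted to graphs. Since you explicitly lean on this limit argument to repair the ``single end cannot split'' step, the proof as proposed does not close.
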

The key in the proof of this lemma is to extract a big enough simply connected piece of $\sig$ and then compare with a big enough sphere (that exists by the lower bound on $H_\sig$), contradicting the mean curvature comparison theorem. In particular, the class of $\Hss$ for $\hi$ satisfy the hypothesis of Theorem \ref{seplem}.

The plane separation lemma allows us to prove the following result.

\begin{pro}\label{1finalcilac}
Let be $\hi$. Then, every properly embedded $\H$-surface in $\h2r$ with finite topology and one end lies inside a solid vertical cylinder.
\end{pro}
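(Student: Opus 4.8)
The plan is to argue by contradiction, assuming that $\sig$ is a properly embedded $\Hs$ in $\h2r$ with finite topology and one end that is \emph{not} contained in any solid vertical cylinder. Since $\sig$ has finite topology and a single end, it is diffeomorphic to a compact surface with one puncture, so outside a sufficiently large compact piece the end $E$ is an annular region diffeomorphic to $\S^1\times[0,\infty)$. The hypothesis $\hi$ gives $4\H>(1-\kappa)=2$, hence $\min\H>1/2=(1-\kappa)/4$, so the mean curvature satisfies the lower bound $H_\sig\geq H_0$ for some $H_0>(1-\kappa)/4$ required by the plane separation lemma (Theorem \ref{seplem}).

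The key step is to invoke the Plane separation lemma with respect to a foliation by \emph{vertical} planes. First I would fix a vertical plane $\Pi$ and the oriented foliation $\{\Pi_t\}$ of vertical planes at signed distance $t$, introduced before Lemma \ref{lemi}. If the end were not cylindrically bounded, then $\sig$ would meet vertical planes $\Pi_{t}$ arbitrarily far from $\Pi$ in \emph{both} the $t>0$ and $t<0$ directions, so one could choose two disjoint vertical planes $P_1,P_2$ at distance greater than $2d$ (where $d$ is the diameter of $S_{H_0}$) such that $\sig$ has noncompact intersection with each of the two half-spaces $P_1^+$ and $P_2^+$. This directly contradicts Theorem \ref{seplem}, which forces all connected components of $\sig\cap P_1^+$ or of $\sig\cap P_2^+$ to be compact. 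Consequently $\sig$ is trapped between two vertical planes in \emph{every} horizontal direction; intersecting the slabs coming from two transverse horizontal directions confines $\sig$ to a region of bounded diameter in the base $\mathbb{H}^2$, i.e.\ to a solid vertical cylinder $\Omega\times\R$ with $\Omega\subset\mathbb{H}^2$ bounded.

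The main obstacle is the careful geometric bookkeeping needed to rule out the noncompactness in a single horizontal direction and to combine finitely many directions into genuine cylindrical boundedness. Concretely, I must verify that ``not contained in a solid cylinder'' really produces, for \emph{some} horizontal direction, an end escaping to $\Pi_t$ for $t\to+\infty$ along a \emph{noncompact} component of $\sig\cap\Pi_t^+$, so that the hypotheses of Theorem \ref{seplem} are met; here the finite-topology and one-end assumptions are essential, since they guarantee that the single end cannot simultaneously produce compact components in all half-spaces while still being unbounded in the base. The convexity of $\mathbb{H}^2$ and the fact that rotations about a vertical axis are isometries of the class of $\Hss$ (Lemma \ref{sime}) let me reduce to finitely many horizontal directions and conclude that the projection of $\sig$ to $\mathbb{H}^2$ is bounded, which is exactly the assertion that $\sig$ lies inside a solid vertical cylinder.
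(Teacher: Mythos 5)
There is a genuine gap at the central step. You claim that if $\sig$ is not cylindrically bounded then, for some horizontal direction, one can find two disjoint vertical planes $P_1,P_2$ at distance greater than $2d$ such that $\sig$ has a \emph{noncompact} component in each of $P_1^+$ and $P_2^+$, contradicting Theorem \ref{seplem}. This implication is false: the conclusion of the plane separation lemma (all components of, say, $\sig\cap P_1^+$ are compact) is perfectly compatible with $\sig$ being unbounded in the $+v$ direction, since $\sig\cap P_1^+$ may consist of a sequence of compact components, each with boundary on $P_1$, reaching farther and farther into $P_1^+$. Properness and the one-end hypothesis do not exclude this picture, so the ``contradiction'' you aim for never materializes. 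You flag this as ``the main obstacle'' to be verified by topological bookkeeping, but it cannot be resolved topologically. The paper's proof closes exactly this gap analytically: once the plane separation lemma gives that all components of $\sig\cap P_1^+$ are compact, it invokes the \emph{horizontal height estimate for compact embedded $\H$-surfaces with boundary in a vertical plane} (Corollary \ref{estialturahorizontalcompac}, which follows from Proposition \ref{estialturahorizontal} via Alexandrov reflection) to conclude that every such compact component stays within distance $2C_{\textbf{h}}$ of $P_1$; this is what converts ``compact components'' into an actual bound in the $+v$ direction. A second application with shifted planes bounds $\sig$ in the $-v$ direction, and rotational invariance of the class of $\Hss$ (Lemma \ref{sime}) makes the bound uniform over all horizontal directions, yielding the solid cylinder. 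Your proposal omits the height estimates entirely, and without them the argument does not go through.

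A secondary issue: even granting boundedness in each horizontal direction, your suggestion that ``two transverse horizontal directions'' suffice to confine $\sig$ to a bounded region of the base is a Euclidean intuition that fails in $\mathbb{H}^2$; the intersection of two transverse slabs between disjoint geodesics in $\mathbb{H}^2$ still contains points of the ideal boundary. One needs the uniform bound for \emph{all} horizontal directions through a fixed point (which the rotational symmetry does provide) to conclude that the projection of $\sig$ to $\mathbb{H}^2$ lies in a fixed ball.
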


\begin{proof}
Let $\sig$ be a properly embedded $\Hs$ in $\h2r$ with finite topology and one end. After an ambient isometry, we may suppose that the origin \textbf{o} in $\h2r$ belongs to $\sig$. Given an horizontal vector $v$, let $\gamma_v(t)$ be the geodesic passing through \textbf{o} at the instant $t=0$ with direction $v$ and denote by $\Pi_t$ to the foliation of vertical planes along $\gamma_v$. Let $d$ be the diameter of the sphere $S_{H_0}$ with constant mean curvature $H_0$, fix $t_0>2d$ and consider the vertical planes $P_1=\Pi_v(t_0)$, $P_2=\Pi_v(-t_0)$. Clearly, $P_1,P_2$ are two disjoint vertical planes at distance to each other greater than $2d$. Moreover, we can assume that both $P_1$ and $P_2$ intersect $\sig$.

Since $\sig$ is a properly embedded surface with finite topology having only one end, we may decompose $\sig=\sig_0\cup\mathcal{A}$, where $\sig_0$ is a compact surface with boundary and $\mathcal{A}$ is a proper embedding of a disk $\S^1\cup [0,\infty)$ into $\h2r$ and such that $\partial\sig_0=\S^1\cup\{0\}$. In this setting, the plane separation lemma \ref{seplem} ensures us that either $\sig\cap P_1^+$ or $\sig\cap P_2^+$ only has compact connected components; for definiteness, say that $\sig\cap P_1^+$ has this property. By the height estimates existing due to Proposition \ref{estialturahorizontalcompac}, the surface $\sig$ is contained in the half-space $\Pi_v^+(t_0+2C_\textbf{h})$, where  $\Pi_v^+(t_0+2C_\textbf{h})$ is the half-space determined by  $\Pi_v(t_0+2C_\textbf{h})$ that does not contains the origin.

Now consider the planes $P_3=\Pi_v(t_0-4C_\textbf{h})$ and $P_4=\Pi_v(-t_0-4C_\textbf{h})$ and their disjoint half-spaces $P_3^+$ and $P_4^+$. The same arguments as before ensures us that at least one of $\sig\cap P_3^+$ or $\sig\cap P_4^+$ only has compact connected components. If this is the case for $\sig\cap P_3^+$, then the height estimates in Proposition \ref{estialturahorizontalcompac} ensures us that $\sig$ is contained in $P_3^-(t_0-2C_\textbf{h})$, which is a contradiction since $\sig\cap P_1\neq\varnothing$. Thus, $\sig\cap P_4^+$ has only compact connected components and arguing as in the previous paragraph we deduce that $\sig$ is contained in the half-space $P_4^+(-t_0-6C_\textbf{h})$

Notice that as Equation \eqref{defHsup} is rotationally symmetric, the choice of the horizontal vector $v$ is independent of this method. Hence, $\sig$ is contained in a solid cylinder, finishing the proof of Theorem \ref{1finalcilac}.
\end{proof}

The previous result only makes sense in the space $\h2r$ since the base of the space $\s2r$ is compact, and every properly embedded surface in $\s2r$ lies at bounded distance to any vertical line $p\times\{\R\}$.

If we suppose that $\hipar$, i.e. $\H$ is also an even function, we obtain a more definite non-existence result which holds not only in the space $\h2r$, but in $\s2r$ as well.

\begin{teo}\label{no1final}
If $\hipar$, then there do not exist properly embedded $\H$-surfaces in $\m2r$ with finite topology and one end.
\end{teo}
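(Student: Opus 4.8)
The plan is to argue by contradiction, combining the vertical height estimates from Theorem \ref{estialturavertical} (available precisely because $\hipar$ guarantees an $\H$-sphere) with the cylindrical confinement that follows, in spirit, from Proposition \ref{1finalcilac} and the plane separation lemma. Suppose $\sig$ is a properly embedded $\H$-surface with finite topology and exactly one end. Since $\hipar \subset \mathfrak{C}^1_\kappa(I)$ (the even condition $4\H(y)>(1-\kappa)\sqrt{1-y^2}$ implies $4\H(y)>(1-\kappa)$ at $y=0$, and by evenness $\min\H = \H(0)$ controls the relevant bound), the hypotheses of the plane separation lemma \ref{seplem} are met with $H_0 = \min\H$. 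First I would use this, now applied to \emph{horizontal} planes $P_1=\M^2\times\{t_0\}$ and $P_2=\M^2\times\{-t_0\}$ at vertical distance greater than $2d$, to force all connected components of $\sig$ on one side, say $\sig\cap P_1^+$, to be compact.

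Next I would invoke the vertical height estimates for embedded surfaces, Corollary \ref{estialturaverticalcompac}: each compact component of $\sig\cap P_1^+$ has distance to $P_1$ bounded by $2C^\kappa$, independent of the component. Exactly as in the proof of Proposition \ref{1finalcilac}, sliding the separating horizontal planes downward by a controlled amount $4C^\kappa$ and reapplying the plane separation lemma together with the height estimate should trap $\sig$ in a \emph{horizontal slab} $\{|z|\leq R\}$ of bounded vertical extent. The single end of $\sig$ must therefore remain within this slab. In the space $\s2r$ the base is compact, so $\sig$ is then globally bounded, hence compact without boundary, contradicting the assumption that it has an end. This disposes of the $\s2r$ case directly.

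For $\h2r$ the slab $\{|z|\leq R\}$ is non-compact, so I must also control the horizontal direction. Here I would bring in Proposition \ref{1finalcilac}, which (since $\hipar\subset\mathfrak{C}^1_{-1}(I)$) already confines $\sig$ to a solid vertical cylinder $\Omega\times\R$ with $\Omega\subset\M^2$ bounded. Intersecting the vertical confinement $\{|z|\leq R\}$ with the cylindrical confinement $\Omega\times\R$ yields that $\sig$ lies in the \emph{bounded} region $\Omega\times[-R,R]$. A properly embedded surface contained in a bounded region is compact, and a compact surface has no ends; this contradicts the existence of the single end and completes the argument.

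\textbf{The main obstacle} I anticipate is the interplay between the two confinements and the bookkeeping that shows the end cannot escape. The vertical height estimate of Corollary \ref{estialturaverticalcompac} applies to each \emph{compact component with boundary in a horizontal plane}, so I must be careful that after the Alexandrov-type reflection and the sliding-plane argument the relevant pieces genuinely are such compact graphs (or embedded surfaces) with boundary in the separating horizontal planes, and that the constant $2C^\kappa$ is uniform across the infinitely many components that the end may contribute. The decomposition $\sig=\sig_0\cup\mathcal{A}$ with $\mathcal{A}$ a single annular end, as in Proposition \ref{1finalcilac}, is what makes this finite and uniform: only finitely many components can meet the compact part $\sig_0$, and the end $\mathcal{A}$ is eventually a graph to which the height estimate applies cleanly. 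Verifying that the two separate confinement mechanisms—vertical via the even prescription and horizontal via rotational symmetry—can be run simultaneously without one destroying the hypotheses of the other is the delicate point, but since both ultimately rest on the same plane separation lemma and the same sphere $S_{H_0}=S_\H$, they are compatible.
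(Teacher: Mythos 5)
Your proposal reproduces the paper's proof essentially step for step: the plane separation lemma applied to two horizontal planes, combined with the vertical height estimate of Corollary \ref{estialturaverticalcompac}, traps $\sig$ in a horizontal slab, after which properness forces compactness (directly in $\s2r$, and in $\h2r$ after intersecting with the solid vertical cylinder provided by Proposition \ref{1finalcilac}), contradicting the existence of an end. The one point to flag is your parenthetical claim that evenness gives $\min\H=\H(0)$ and hence $\mathfrak{C}^1_{\kappa,\mathrm{even}}(I)\subset\mathfrak{C}^1_{\kappa}(I)$: this is false as stated (an even function need not attain its minimum at $y=0$, and for $\kappa=-1$ the bound $4\H(y)>2\sqrt{1-y^2}$ is genuinely weaker than $4\H(y)>2$), although the paper itself applies Lemma \ref{seplem} and Proposition \ref{1finalcilac} under the hypothesis $\H\in\mathfrak{C}^1_{\kappa,\mathrm{even}}(I)$ without commenting on this point, so your argument is no less complete than the original.
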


\begin{proof}
Suppose that the statement of Theorem \ref{no1final} does not hold, and let $\sig$ be a properly embedded $\Hs$ in $\m2r$ with finite topology and one end. Let $P_1,P_2$ be two horizontal planes at distance to each other greater than $2d$, where $d$ denotes as usual the diameter of the sphere in $\m2r$ with constant mean curvature equal to $H_0$, for some $H_0<\min\H$. After a vertical translation if necessary, the vertical height estimates for compact, embedded $\Hss$ given by Corollary \ref{estialturaverticalcompac} along with the plane separation lemma \ref{seplem} ensures us that $\sig$ is contained in the half-space determined by one of the horizontal planes $P_i$, say for definiteness $P_1$. A similar argument to the one used in the proof of Proposition \ref{1finalcilac} allows us to enclose $\sig$ in the open slab determined by the planes $P_1$ and $P_2$, i.e. the height function of $\sig$ is bounded.

However, this is a contradiction in both spaces $\s2r$ and $\h2r$, as detailed next. If the space is $\s2r$, then all the coordinates of $\sig$ are bounded, and since $\sig$ is proper then $\sig$ would be a closed surface, a contradiction with the fact that $\sig$ has one end. If the space is $\h2r$, then we know by Proposition \ref{1finalcilac} that $\sig$ is cylindrically bounded and thus is contained also in a compact region, which yields that $\sig$ is a closed surface, also a contradiction.

This concludes the proof of Theorem \ref{no1final}.
\end{proof}

The following structure-type result is a straightforward consequence of Theorem \ref{no1final}, Proposition 2.3 in \cite{Bue3} and the Hopf-type theorem in \cite{GaMi} for immersed spheres.

\begin{teo}\label{estructura1final}
Let be $\hipar$ and $\sig$ a properly embedded $\H$-surface in $\m2r$ with finite topology and at most one end. Then, $\sig$ is, up to ambient translations, a rotational $\H$-sphere.
\end{teo}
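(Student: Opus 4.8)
The plan is to argue according to the number of ends of $\sig$, which by hypothesis is either one or zero, and in each case to reduce the statement to results already at hand.

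First I would dispose of the case of exactly one end. Since $\hipar$, Theorem \ref{no1final} asserts that there are no properly embedded $\Hss$ in $\m2r$ with finite topology and one end. Thus this case is vacuous, and we may assume henceforth that $\sig$ has no ends.

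Next I would treat the zero-end case. Recalling the description of ends given in the introduction, namely that a properly embedded surface with finite topology is homeomorphic to a closed surface with one puncture per end, the hypotheses of finite topology together with zero ends force $\sig$ to be a closed (compact, boundaryless) surface. It then remains to show that a closed $\Hs$ is, up to a vertical translation, the rotational $\H$-sphere $S_\H$ provided by Theorem 3.8 in \cite{Bue3}. At this point I would invoke Proposition 2.3 in \cite{Bue3} to conclude that any such closed $\Hs$ is a topological sphere. Once $\sig$ is known to be an immersed $\H$-sphere, the Hopf-type theorem of Gálvez and Mira \cite{GaMi}, which classifies immersed spheres governed by an elliptic prescribed-curvature equation of the form \eqref{defHsup}, guarantees that $\sig$ is a rotational, embedded $\H$-sphere. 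Since any two such spheres differ by an ambient translation, this gives $\sig = S_\H$ up to translation, as claimed.

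The only genuine content lies in the compact case, and specifically in controlling the topology of $\sig$: the theorem of \cite{GaMi} rules only on spheres, so the crux is the passage from a closed $\Hs$ to a genus-zero surface, which is precisely what Proposition 2.3 in \cite{Bue3} supplies. The remaining steps are bookkeeping, the one-end case being void by Theorem \ref{no1final} and proper embeddedness with finite topology and zero ends being synonymous with compactness.
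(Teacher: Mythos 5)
Your proposal is correct and follows essentially the same route as the paper: the one-end case is discharged by Theorem \ref{no1final}, and the zero-end (compact) case is handled by the Alexandrov-reflection result (Proposition 2.3 in \cite{Bue3}) to get sphere topology, followed by the Gálvez--Mira Hopf-type theorem to identify $\sig$ with the rotational $\H$-sphere. These are exactly the three ingredients the paper cites, so no further comparison is needed.
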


\begin{proofsk}
If $\sig$ has no ends, then $\sig$ is a closed, embedded surface and thus applying Alexandrov reflection technique ensures us that $\sig$ is topologically a sphere and that is rotational around some vertical line, see \cite{Bue3} for details on how this sphere is constructed. Moreover, $\sig$ is unique among all $\H$-spheres because of the uniqueness result stated in \cite{GaMi}. If $\sig$ has one end, then such surface does not exists because of Theorem \ref{no1final}.

This concludes the proof of Theorem \ref{estructura1final}.
\end{proofsk}

\def\refname{References}

\end{document}